\documentclass{article}
\usepackage{amsmath}
\usepackage{amsfonts}
\usepackage{hyperref}

\newtheorem{theorem}{Theorem}

\newtheorem{corollary}[theorem]{Corollary}

\newtheorem{definition}[theorem]{Definition}
\newtheorem{example}[theorem]{Example}

\newtheorem{remark}[theorem]{Remark}

\newenvironment{proof}[1][Proof]{\noindent\textbf{#1.} }{\ \rule{0.5em}{0.5em}}

\begin{document}

\title{New Laplace convolution integrals involving exponential, error, and
parabolic cylinder functions with applications in heat transfer and linear
viscoelasticity}
\author{Gonz\'{a}lez Santander, Juan Luis \\
Department of Mathematics. Universisty of Oviedo. \\
C/ Calvo Sotelo 18. Oviedo 33007, Spain. }
\maketitle

\begin{abstract}
We compute several new Laplace convolution integrals involving exponential
functions, error functions, and parabolic cylinder functions. We extend our
results using formulas for repeated integration and differentiation. We
apply these convolution integrals to heat transfer and linear viscoelastic
problems. As a by-product, we obtain new integral representations for the
error function and the complementary error function.
\end{abstract}

$\mathbf{Keywords}$: Laplace transform, convolution theorem, error
functions, generalized hypergeometric functions, heat transfer problems,
linear viscoelasticity problems.

$\mathbf{MSC}$: 44A10, 33C20, 33B20

\section{Introduction}

The Laplace transform is one of the most powerful analytical tools in
operational calculus, since it provides systematic techniques for solving
linear differential equations \cite{Davies}. However, the Laplace transform
can also be applied to the solution of nonlinear equations, such as those
arising in boundary layer fluid mechanics problems \cite%
{ApelblatBlasius1,ApelblatBlasius2,ApelblatBlasius3}. Moreover, it can be
applied to fractional differential equations, as in the case of the
analytical solution of the Bagley--Torvik equation \cite{BagleyTorvik}.

Despite the fact that classical compilations of Laplace and inverse Laplace
transforms are very extensive (see \cite{Oberhettinger,Prudnikov4,Prudnikov5}%
), it is worth noting that new Laplace transforms and their applications
continue to appear in the recent literature (see, for example, \cite%
{Rogosin,ApelblatIntegralML,LaplaceJL}).

Here, we focus on the application of the Laplace transform to derive new
integrals that are not reported in the most common tables, such as \cite%
{Prudnikov1,Prudnikov2,Brychkov}. In particular, we consider the application
of the convolution theorem for the Laplace transform. In \cite{Narahari},
the convolution theorem was applied to obtain new inverse Laplace transforms
from known integrals. However, here we are interested in the reciprocal
approach, namely, obtaining new integrals from known inverse Laplace
transforms by applying the convolution theorem. This approach has previously
been used by the author in the context of modified Bessel functions \cite%
{IntegralesLaplaceJL}. However, it seems that the convolution theorem has
not yet been systematically exploited to compute convolution integrals using
the Laplace transform. Moreover, convolution integrals appear in many
applied problems. For instance, the application of Duhamel's principle to
heat transfer problems leads to convolution integrals \cite[Sect. 1.14.II]%
{Jaeger}. Also, applying Boltzmann superposition principle to linear
viscoelastic materials, the general stress-strain relation is expressed in
terms of a convolution integral \cite[Sect. 2.1]{MainardiBook}.

Since the number of such convolution integrals is quite large, we have
structured the material as a series of papers, depending on the type of
function appearing in the integrand. This paper is the second in the series;
the first one was devoted to Laplace convolution integrals involving
trigonometric and hyperbolic functions \cite{ConvolutionTrigJL}.

This paper is organized as follows. In Section \ref{Section: Preliminaries},
we present some fundamentals of the Laplace integral transform, as well as
the formula for repeated integration. We also define all the special
functions used throughout the paper, together with some of the formulas they
satisfy. Section \ref{Section: Main Results} is devoted to the derivation of
Laplace convolution integrals involving exponential, error, and parabolic
cylinder functions. We show how these convolution integrals arise in heat
transfer and linear viscoelastic problems, enhancing the known analytical
solutions to these kinds of problems. Finally, we present our conclusions in
Section \ref{Section: Conclusions}.

\section{Preliminaries \label{Section: Preliminaries}}

\subsection{Integral transforms}

\begin{definition}[Laplace transform]
Let $f\left( t\right) $ be a real- or complex-valued function of the
variable $t>0$, and let $s$ be a real or complex parameter. The Laplace
transform of $f\left( t\right) $ is defined as \cite[Eqn. 1.1]{Schiff}:
\begin{equation}
F\left( s\right) =\mathcal{L}\left[ f\left( t\right) ;s\right]
=\int_{0}^{\infty }e^{-st}f\left( t\right) \,dt, =\lim_{\tau \rightarrow
\infty }\int_{0}^{\tau }e^{-st}f\left( t\right) \,dt,  \label{Laplace_def}
\end{equation}%
whenever the limit exists and is finite.
\end{definition}

We recall that if $f\left( t\right) $ is piecewise continuous on $\left[
0,\infty \right) $ and of exponential order $\alpha $, then the Laplace
transform $\mathcal{L}\left[ f\left( t\right) ;s\right] $ exists for $\Re
\left( s\right) >\alpha $ and converges absolutely. In this case, we denote
the inverse Laplace transform by
\begin{equation*}
f\left( t\right) =\mathcal{L}^{-1}\left[ F\left( s\right) ;t\right] .
\end{equation*}

\begin{theorem}[Derivative theorem]
If $f\left( t\right) $ is continuous on $\left( 0,\infty \right) $, is of
exponential order $\alpha $, and $f^{\prime }\left( t\right) $ is piecewise
continuous on $\left[ 0,\infty \right) $, then \cite[Eqn. 2.17]{Schiff}:
\begin{equation}
\mathcal{L}\left[ f^{\prime }\left( t\right) ;s\right] =s\,\mathcal{L}\left[
f\left( t\right) ;s\right] -f\left( 0^{+}\right) ,\quad \Re \left( s\right)
>\alpha .  \label{Derivative_Laplace}
\end{equation}
\end{theorem}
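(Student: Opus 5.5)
The plan is to integrate by parts on a finite interval $[\varepsilon,\tau]$ and then let $\varepsilon\to 0^{+}$ and $\tau\to\infty$, using the definition (\ref{Laplace_def}) of the Laplace transform as a limit of truncated integrals. Fix $s$ with $\Re(s)>\alpha$. Because $f'$ is only piecewise continuous, I first split $[\varepsilon,\tau]$ at the finitely many discontinuity points $0<t_1<\dots<t_n<\tau$ of $f'$. On each subinterval $f$ is continuously differentiable up to the endpoints, in the one-sided sense. On each piece,
\begin{equation*}
\int_{t_{k}}^{t_{k+1}}e^{-st}f'(t)\,dt=\Bigl[e^{-st}f(t)\Bigr]_{t_{k}}^{t_{k+1}}+s\int_{t_{k}}^{t_{k+1}}e^{-st}f(t)\,dt .
\end{equation*}
When these identities are summed, the interior boundary terms telescope because $f$ is continuous on $(0,\infty)$. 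This leaves
\begin{equation*}
\int_{\varepsilon}^{\tau}e^{-st}f'(t)\,dt=e^{-s\tau}f(\tau)-e^{-s\varepsilon}f(\varepsilon)+s\int_{\varepsilon}^{\tau}e^{-st}f(t)\,dt .
\end{equation*}

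Next I pass to the limits. Exponential order gives $|f(\tau)|\le Me^{\alpha\tau}$ for large $\tau$. Hence $|e^{-s\tau}f(\tau)|\le Me^{-(\Re s-\alpha)\tau}\to 0$ as $\tau\to\infty$. By the remark after the definition, $\int_0^\infty e^{-st}f(t)\,dt$ converges absolutely for $\Re(s)>\alpha$. Therefore the last term tends to $s\,\mathcal{L}[f(t);s]$ as $\varepsilon\to0^+$ and $\tau\to\infty$.

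For the term at $\varepsilon$, I note that $f'$ is piecewise continuous on $[0,\infty)$, so it is bounded near $0$. Then $f(\varepsilon)=f(\delta)-\int_\varepsilon^\delta f'$ is Cauchy as $\varepsilon\to0^+$, so $f(0^+)$ exists and $e^{-s\varepsilon}f(\varepsilon)\to f(0^+)$. The same bound shows that the left-hand side converges as $\varepsilon\to0^+$, since $f'$ is integrable near $0$.

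Combining these limits shows that $\lim_{\tau\to\infty}\int_0^\tau e^{-st}f'(t)\,dt$ exists and equals $s\,\mathcal{L}[f(t);s]-f(0^+)$, which is (\ref{Derivative_Laplace}). The main obstacle is the careful handling of the endpoint $t=0$: I must justify that $f(0^+)$ exists and that the improper integral at $0$ converges. I plan to do both through the boundedness of $f'$ near $0$ coming from piecewise continuity, as described above.
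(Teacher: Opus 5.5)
Your proof is correct. The paper gives no argument of its own and only cites Schiff, where the result is proved essentially as you do it: integrate by parts on $[\varepsilon,\tau]$ (split at the jumps of $f'$), let exponential order send $e^{-s\tau}f(\tau)\to 0$, and obtain $f(0^{+})$ from the boundedness of $f'$ near $0$.

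One thing to tidy is the order of steps. Invoking the absolute convergence of $\int_0^\infty e^{-st}f(t)\,dt$ requires $f$ to be bounded near $t=0$. That boundedness is exactly what your later argument for the existence of $f(0^{+})$ supplies, so establish $f(0^{+})$ first.
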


\begin{theorem}[Convolution]
If $f\left( t\right) =\mathcal{L}^{-1}\left[ F\left( s\right) ;t\right] $
and $g\left( t\right) =\mathcal{L}^{-1}\left[ G\left( s\right) ;t\right] $
are piecewise continuous on $\left[ 0,\infty \right) $ and of exponential
order $\alpha $, then \cite[Theorem 2.39]{Schiff}:
\begin{equation}
\int_{0}^{t}\mathcal{L}^{-1}\left[ F\left( s\right) ;\tau \right] \,\mathcal{%
L}^{-1}\left[ G\left( s\right) ;t-\tau \right] \,d\tau =\mathcal{L}^{-1}%
\left[ F\left( s\right) \,G\left( s\right) ;t\right] , \quad \Re \left(
s\right) >\alpha ,\,t>0.  \label{Convolution_Laplace}
\end{equation}
\end{theorem}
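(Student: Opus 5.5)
The plan is to prove the statement in its forward form and then read it backwards. Set $h(t)=(f*g)(t)=\int_{0}^{t}f(\tau)\,g(t-\tau)\,d\tau$. I will show that $\mathcal{L}\left[ h(t);s\right] =F(s)\,G(s)$ for $\Re(s)>\alpha$. Uniqueness of the inverse Laplace transform for piecewise continuous functions of exponential order (Lerch's theorem) then gives $h(t)=\mathcal{L}^{-1}\left[ F(s)G(s);t\right]$, which is (\ref{Convolution_Laplace}). Equality holds for every $t>0$ once we know $h$ is continuous, and continuity follows because a convolution of two locally integrable, piecewise continuous functions is continuous.

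First I check that $h$ is admissible. The hypotheses give $|f(t)|,|g(t)|\le Me^{\alpha t}$, possibly after enlarging $\alpha$ to a common value. Then
\begin{equation*}
|h(t)|\le \int_{0}^{t}M^{2}e^{\alpha \tau }e^{\alpha (t-\tau )}\,d\tau =M^{2}t\,e^{\alpha t}.
\end{equation*}
So $h$ is of exponential order $\alpha+\varepsilon$ for every $\varepsilon>0$, and its Laplace transform converges absolutely for $\Re(s)>\alpha$.

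Next comes the main computation. Write
\begin{equation*}
\mathcal{L}[h;s]=\int_{0}^{\infty }\int_{0}^{t}e^{-st}f(\tau )g(t-\tau )\,d\tau \,dt
\end{equation*}
as an integral over the region $0<\tau<t<\infty$ and exchange the order of integration. Substituting $u=t-\tau$ in the inner integral gives
\begin{equation*}
\int_{0}^{\infty }e^{-s\tau }f(\tau )\,d\tau \int_{0}^{\infty }e^{-su}g(u)\,du=F(s)\,G(s).
\end{equation*}

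The only delicate step is justifying the interchange. I would apply Tonelli's theorem to the absolute values: with $\sigma=\Re(s)>\alpha$, the double integral of $e^{-\sigma t}|f(\tau)||g(t-\tau)|$ equals $\int_0^\infty e^{-\sigma\tau}|f|\,d\tau\cdot\int_0^\infty e^{-\sigma u}|g|\,du$. Both factors are finite by the exponential-order bound. Fubini's theorem then permits the swap for the complex integrand.
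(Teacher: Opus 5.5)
Your proof is correct and follows essentially the same route as the source: the paper gives no proof of its own and cites Schiff's Theorem 2.39. That proof is the same interchange of integration order over $0<\tau<t<\infty$, justified by absolute convergence of the Laplace integrals for $\Re(s)>\alpha$, after which the inverse form follows from uniqueness of the transform (Lerch), as you argue; your bound $|h(t)|\le M^{2}te^{\alpha t}$ and the continuity of $h$ make the pointwise identity for every $t>0$ precise.
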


Although the convolution theorem (\ref{Convolution_Laplace}) is stated for $%
t>0$, it is worth noting that, in many cases, this restriction can be
relaxed to allow negative or complex values of $t$. However, throughout the
paper, we assume that $t>0$.

\begin{theorem}[Repeated integration]
If $f\left( t\right) $ is continuous or piecewise continuous on $\left[a,b%
\right] $, then \cite[Eqn. 1.4.31]{NIST}:
\begin{equation}
\int_{a}^{b}dt_{n}\int_{a}^{t_{n}}dt_{n-1}\cdots
\int_{a}^{t_{2}}dt_{1}\int_{a}^{t_{1}}f\left( t\right) dt=\frac{1}{n!}%
\int_{a}^{b}\left( b-t\right) ^{n}\,f\left( t\right) \,dt.
\label{Repeated_integration}
\end{equation}
\end{theorem}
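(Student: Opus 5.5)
We prove the repeated integration formula (\ref{Repeated_integration}) by induction on $n$, with Fubini's theorem (exchange of the order of integration over a triangle) as the only tool. Set
\begin{equation*}
I_{0}\left( x\right) =\int_{a}^{x}f\left( t\right) dt,\qquad I_{k}\left( x\right) =\int_{a}^{x}I_{k-1}\left( t_{k}\right) dt_{k},\quad k\geq 1.
\end{equation*}
With this notation the left-hand side of (\ref{Repeated_integration}) is $I_{n}\left( b\right)$. It therefore suffices to prove the stronger statement
\begin{equation*}
I_{k}\left( x\right) =\frac{1}{k!}\int_{a}^{x}\left( x-t\right) ^{k}f\left( t\right) dt
\end{equation*}
for every $x\in \left[ a,b\right]$ and every $k\geq 0$, and then take $k=n$, $x=b$.

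The base case $k=0$ is the definition of $I_{0}$. Because $f$ is piecewise continuous, hence bounded and Riemann integrable on $\left[ a,b\right]$, each $I_{k}$ is continuous, so all the iterated integrals exist.

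For the inductive step, assume the formula holds for $k-1$. Then
\begin{equation*}
I_{k}\left( x\right) =\frac{1}{\left( k-1\right) !}\int_{a}^{x}\int_{a}^{s}\left( s-t\right) ^{k-1}f\left( t\right) dt\,ds.
\end{equation*}
The integrand is bounded and piecewise continuous on the triangle $a\leq t\leq s\leq x$, so Fubini's theorem lets us swap the order of integration. This gives
\begin{equation*}
I_{k}\left( x\right) =\frac{1}{\left( k-1\right) !}\int_{a}^{x}f\left( t\right) \left( \int_{t}^{x}\left( s-t\right) ^{k-1}ds\right) dt.
\end{equation*}
The inner integral equals $\left( x-t\right) ^{k}/k$, which closes the induction.

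The only point needing care is justifying the swap of integration order for a piecewise continuous $f$. Its discontinuities lie on finitely many lines $t=c$, which have measure zero, so the integrand is Riemann (or Lebesgue) integrable on the triangle and Fubini applies. Alternatively, one can argue by differentiation. Both sides vanish at $x=a$. For continuous $f$, differentiating the right-hand side in $x$ with the Leibniz rule reproduces the right-hand side at order $k-1$, and at the jump points one appeals to continuity of both sides.
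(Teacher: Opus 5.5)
The paper gives no proof of (\ref{Repeated_integration}); it simply quotes Eqn.\ 1.4.31 of the NIST handbook, so there is no argument of the author's to compare yours with. Your proof is the standard one and it is correct.

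\begin{itemize}
\item Your bookkeeping, $I_0(x)=\int_a^x f$ and $I_k=\int_a^x I_{k-1}$, correctly reads the left side as an $(n+1)$-fold integral. That is exactly why the constant is $1/n!$ and the kernel is $(b-t)^n$.
\item The inductive step is a single exchange of order over the triangle $a\le t\le s\le x$, followed by $\int_t^x(s-t)^{k-1}\,ds=(x-t)^k/k$.
\item The differentiation variant you sketch also works. By the inductive hypothesis, the difference of the two sides is continuous, vanishes at $a$, and has zero derivative off a finite set, so it is identically zero.
\end{itemize}

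Compared with the bare citation, your route is self-contained and gives a sharper view of the hypotheses. On the triangle, $|(s-t)^{k-1}f(t)|\le (x-a)^{k-1}|f(t)|$. So Tonelli--Fubini justifies the swap for any $f$ integrable on $[a,b]$, and piecewise continuity is more than is needed.

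The same exchange of order also gives the composition rule for the non-integer kernels $(x-t)^{\alpha-1}/\Gamma(\alpha)$. There the inner integral becomes $\int_t^x (x-s)^{\beta-1}(s-t)^{\alpha-1}\,ds=\mathrm{B}(\alpha,\beta)\,(x-t)^{\alpha+\beta-1}$. This is worth keeping in mind because the paper later invokes (\ref{Repeated_integration}) to obtain (\ref{Int_repeated_integration}) with $\Gamma(\mu)$ for unrestricted $\mu$. That goes beyond what the integer-order statement literally provides.
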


\subsection{Special functions}

For $p\leq q$, the following series converges for $z\in \mathbb{C}$ and
defines an entire function called the generalized hypergeometric function
\cite[Eqn. 16.2.1]{NIST}:
\begin{equation}
_{p}F_{q}\left(
\begin{array}{c}
a_{1},\ldots ,a_{p} \\
b_{1},\ldots ,b_{q}%
\end{array}%
;z\right) =\sum_{k=0}^{\infty }\frac{\left( a_{1}\right) _{k}\cdots \left(
a_{p}\right) _{k}}{\left( b_{1}\right) _{k}\cdots \left( b_{q}\right) _{k}}%
\frac{z^{k}}{k!},  \label{pFq_def}
\end{equation}%
where $\left( x\right) _{k}$ denotes the Pochhammer symbol \cite[Eqn. 5.2.4-5%
]{NIST}%
\begin{equation}
\left( x\right) _{k}=\left\{
\begin{array}{ll}
x\left( x+1\right) \left( x+2\right) \cdots \left( x+k-1\right) , & \left(
x\right) _{0}=1, \\
\displaystyle%
\frac{\Gamma \left( x+k\right) }{\Gamma \left( x\right) },\quad & x\neq
0,-1,-2,\ldots%
\end{array}%
\right.  \label{Pochhammer_def}
\end{equation}%
and $\Gamma \left( z\right) $ denotes the gamma function \cite[Eqn. 5.2.1]%
{NIST},%
\begin{equation}
\Gamma \left( z\right) =\int_{0}^{\infty }t^{z-1}e^{-t}dt,\quad \Re \left(
z\right) >0.  \label{Gamma_def}
\end{equation}%
A well-known property of the gamma function is \cite[Eqn. 1.2.1]{Lebedev}
\begin{equation}
\Gamma \left( z+1\right) =z\,\Gamma \left( z\right) .
\label{Gamma_factorial}
\end{equation}%
The Pochhammer symbol has the following properties \cite[Eqn. 18:5:1\&7]%
{Atlas}:%
\begin{eqnarray}
\left( -x\right) _{n} &=&\left( -1\right) ^{n}\left( x-n+1\right) _{n},
\label{Pochhammer_1} \\
x\left( x+1\right) _{n} &=&\left( x+n\right) \left( x\right) _{n}.
\label{Pochhammer_2}
\end{eqnarray}%
Well-known particular values of the gamma function are \cite[Eqns. 5.4.1\&6]%
{NIST}:
\begin{equation}
\Gamma \left( 1\right) =1,\quad \Gamma \left( \frac{1}{2}\right) =\sqrt{\pi }%
.  \label{Gamma_particular}
\end{equation}%
Moreover, according to \cite[Eqns. 43:4:1\&3]{Atlas}, we have for $%
n=0,1,2,\ldots $
\begin{eqnarray}
\Gamma \left( n+1\right) &=&n!  \label{Gamma(n+1)} \\
\Gamma \left( \frac{1}{2}+n\right) &=&\frac{\left( 2n-1\right) !!}{2^{n}}%
\sqrt{\pi }.  \label{gamma(n+1/2)}
\end{eqnarray}

The beta function is defined as \cite[Eqn. 5.12.1]{NIST}:
\begin{equation}
\mathrm{B}\left( a,b\right) =\int_{0}^{1}t^{a-1}\left( 1-t\right) ^{b-1}dt=%
\frac{\Gamma \left( a\right) \,\Gamma \left( b\right) }{\Gamma \left(
a+b\right) },\quad \Re \left( a\right) ,\Re \left( b\right) >0.
\label{Beta_def}
\end{equation}

Also, a well-known summation formula is the Chu-Vandermonde identity:%
\begin{equation}
_{2}F_{1}\left(
\begin{array}{c}
-n,b \\
c%
\end{array}%
;1\right) =\frac{\left( c-b\right) _{n}}{\left( c\right) _{n}},\quad
n=0,1,2,\ldots  \label{Chu_formula}
\end{equation}%
In addition, a useful transformation of the Kummer function is \cite[Eqn.
7.11.1(2)]{Prudnikov3}:
\begin{equation}
_{1}F_{1}\left(
\begin{array}{c}
a \\
b%
\end{array}%
;z\right) =e^{z}\,_{1}F_{1}\left(
\begin{array}{c}
b-a \\
b%
\end{array}%
;-z\right) .  \label{1F1_reduction_1}
\end{equation}

The error function, the complementary error function, and the imaginary
error function are defined as \cite[Eqns. 40:3:1\&40:0:1\&42:1:1]{Atlas}:
\begin{eqnarray}
\mathrm{erf}\left( z\right) &=&\frac{2}{\sqrt{\pi }}\int_{0}^{z}\exp \left(
-t^{2}\right) \,dt,  \label{erf_def} \\
\mathrm{erfc}\left( z\right) &=&\frac{2}{\sqrt{\pi }}\int_{z}^{\infty }\exp
\left( -t^{2}\right) \,dt=1-\mathrm{erf}\left( z\right) ,  \label{erfc_def}
\\
\mathrm{erfi}\left( z\right) &=&\frac{\mathrm{erf}\left( i\,z\right) }{i}.
\label{erfi_def}
\end{eqnarray}%
Directly from (\ref{erf_def}) and (\ref{erfc_def}), we have
\begin{eqnarray}
\mathrm{erf}\left( 0\right) &=&0,  \label{erf(0)} \\
\mathrm{erfc}\left( 0\right) &=&1.  \label{erfc(0)}
\end{eqnarray}%
The error function has the following series expansion \cite[Eqn. 40:6:1]%
{Atlas}:
\begin{equation}
\mathrm{erf}\left( z\right) =\frac{2z}{\sqrt{\pi }}\sum_{k=0}^{\infty }\frac{%
\left( \frac{1}{2}\right) _{k}}{k!\left( \frac{3}{2}\right) _{k}}\left(
-z^{2}\right) ^{k},  \label{erf_series}
\end{equation}%
and the complementary error function has the following asymptotic behaviour
\cite[Eqn. 40:9:1]{Atlas}:%
\begin{equation}
\mathrm{erfc}\left( z\right) \approx \frac{\exp \left( -z^{2}\right) }{\sqrt{%
\pi }z}\sum_{k=0}^{\infty }\frac{\left( \frac{1}{2}\right) _{k}}{\left(
-z\right) ^{k}},\quad z\rightarrow \infty ,  \label{erfc_asymptotic}
\end{equation}%
thus, according to (\ref{erfc_def})\ and (\ref{erfc_asymptotic}), we have \
\begin{equation}
\mathrm{erf}\left( z\right) \approx 1-\frac{\exp \left( -z^{2}\right) }{%
\sqrt{\pi }z},\quad z\rightarrow \infty .  \label{erf_asymptotic}
\end{equation}%
Also, for $n\geq 1$, we have the following derivative formula \cite[Eqn.
1.5.1(1)]{Brychkov}:%
\begin{equation}
\frac{d^{n}}{dz^{n}}\mathrm{erf}\left( a\,z\right) =\left( -1\right) ^{n-1}%
\frac{2\,a^{n}}{\sqrt{\pi }}\exp \left( -a^{2}z^{2}\right) \,H_{n-1}\left(
a\,z\right) ,  \label{D^n[erf]}
\end{equation}%
where $H_{n}\left( z\right) $ denotes the Hermite polynomial of $n$-th order
\cite[Eqn. 4.9.2]{Lebedev}. Moreover, the following derivative formula holds
\cite[Eqn. 10.5.4]{Lebedev}:
\begin{equation}
\frac{d^{n}}{dz^{n}}\left[ \exp \left( z^{2}\right) \,\mathrm{erfc}\left(
z\right) \right] =\frac{2^{n+1}n!\left( -1\right) ^{n}}{\sqrt{\pi }}%
H_{-n-1}\left( z\right) .  \label{D^n[exp*erfc]}
\end{equation}

The Dawson integral is defined as \cite[Eqn. 42:3:1]{Atlas}
\begin{equation}
\mathrm{daw}\left( z\right) =\int_{0}^{z}\exp \left( t^{2}-z^{2}\right) \,dt.
\label{Dawson_def}
\end{equation}%
The following relation between the Dawson integral and the imaginary error
function is satisfied \cite[Eqn. 42:1:1]{Atlas}:
\begin{equation}
\mathrm{daw}\left( z\right) =\frac{\sqrt{\pi }}{2}\exp \left( -z^{2}\right)
\,\mathrm{erfi}\left( z\right) .  \label{Dawson_erfi}
\end{equation}%
In addition, we have the expansion \cite[Eqn. 42:6:1]{Atlas}:\
\begin{equation}
\mathrm{daw}\left( z\right) =z\sum_{k=0}^{\infty }\frac{\left( -z^{2}\right)
^{k}}{\left( \frac{3}{2}\right) _{k}},  \label{Dawson_series}
\end{equation}%
and the asymptotic behaviour \cite[Eqn. 42:6:4]{Atlas}:
\begin{equation}
\mathrm{daw}\left( z\right) \approx \frac{1}{2z}\sum_{k=0}^{\infty }\frac{%
\left( \frac{1}{2}\right) _{k}}{z^{2k}},\quad z\rightarrow \infty .
\label{Dawson_z->inf}
\end{equation}

The integral of the complementary error function is defined as \cite[Eqn.
40:13:1]{Atlas}:%
\begin{equation}
\mathrm{ierfc}\left( z\right) =\int_{z}^{\infty }\mathrm{erfc}\left(
t\right) \,dt.  \label{ierfc_def}
\end{equation}%
The $n$-fold integrals of the complementary error function are recursively
defined as \cite[Eqn. 40:13:3]{Atlas}:
\begin{equation*}
\mathrm{i}^{n}\mathrm{erfc}\left( z\right) =\int_{z}^{\infty }\mathrm{i}%
^{n-1}\mathrm{erfc}\left( t\right) \,dt,\quad n=1,2,\ldots
\end{equation*}%
where \cite[Eqns. 43.13.4\&5]{Atlas},%
\begin{eqnarray}
\mathrm{i}^{0}\mathrm{erfc}\left( z\right) &=&\mathrm{erfc}\left( z\right) ,
\label{i^0_erfc} \\
\mathrm{i}^{1}\mathrm{erfc}\left( z\right) &=&\mathrm{ierfc}\left( z\right) .
\label{i^1_erfc}
\end{eqnarray}

The parabolic cylinder function $D_{\nu }\left( z\right) $ is defined as the
solution of the differential equation \cite[Eqn. 12.2.4]{NIST}:%
\begin{equation*}
\frac{d^{2}w}{dz^{2}}+\left( \nu +\frac{1}{2}-\frac{z^{2}}{2}\right) w=0.
\end{equation*}%
The following reduction formulas hold true \cite[Eqns. 12.7.2\&12.7.6-7]%
{NIST}:
\begin{eqnarray}
D_{-n-1}\left( z\right) &=&\sqrt{\pi }\,2^{\left( n-1\right) /2}\exp \left(
\frac{z^{2}}{4}\right) \,\,\mathrm{i}^{n}\mathrm{erfc}\left( \frac{z}{\sqrt{2%
}}\right) ,\quad n=0,1,2,\ldots  \label{D_-n-1_reduction} \\
D_{n}\left( z\right) &=&2^{-n/2}\exp \left( -\frac{z^{2}}{4}\right)
\,H_{n}\left( \frac{z}{\sqrt{2}}\right) ,\quad n=0,1,2,\ldots
\label{D_n_reduction}
\end{eqnarray}%
thus, for instance,%
\begin{eqnarray}
D_{-1}\left( z\right) &=&\sqrt{\frac{\pi }{2}}\exp \left( \frac{z^{2}}{4}%
\right) \,\mathrm{erfc}\left( \frac{z}{\sqrt{2}}\right) ,
\label{D_-1_resultado} \\
D_{0}\left( z\right) &=&\exp \left( -\frac{z^{2}}{4}\right) ,
\label{D_0_reduction} \\
D_{1}\left( z\right) &=&z\exp \left( -\frac{z^{2}}{4}\right) .
\label{D_1_reduction}
\end{eqnarray}%
Moreover, according to \cite[Eqn. 46:7:1]{Atlas}:%
\begin{equation}
D_{\nu }\left( 0\right) =\frac{2^{\nu /2}\sqrt{\pi }}{\Gamma \left( \frac{%
1-\nu }{2}\right) }.  \label{D_nu(0)}
\end{equation}

For $b\notin
\mathbb{Z}
$, the Tricomi function is defined as \cite[Eqn. 48:3:1]{Atlas}:%
\begin{eqnarray}
&&\mathrm{U}\left( a,b,z\right)   \label{Tricomi_def} \\
&=&\frac{\Gamma \left( b-1\right) }{\Gamma \left( a\right) }%
\,z^{1-b}\,_{1}F_{1}\left(
\begin{array}{c}
a-b+1 \\
2-b%
\end{array}%
;z\right) +\frac{\Gamma \left( 1-b\right) }{\Gamma \left( a-b+1\right) }%
\,_{1}F_{1}\left(
\begin{array}{c}
a \\
b%
\end{array}%
;z\right) ,  \notag
\end{eqnarray}%
An integral representation of the Tricomi function is given by the following
Laplace transform \cite[Eqn. 48:3:6]{Atlas}:
\begin{eqnarray}
&&\mathrm{U}\left( a,b,z\right)   \label{Tricomi_integral} \\
&=&\frac{1}{\Gamma \left( a\right) }\int_{0}^{\infty }e^{-z\,t}t^{a-1}\left(
1+t\right) ^{b-a-1}dt=\frac{1}{\Gamma \left( a\right) }\mathcal{L}\left[
t^{a-1}\left( 1+t\right) ^{b-a-1};z\right] .  \notag
\end{eqnarray}%
A useful reduction formula is \cite[Eqn. 48:4:9]{Atlas}:%
\begin{equation}
\mathrm{U}\left( a+\frac{1}{2},\frac{3}{2},z\right) =\frac{2^{a}}{\sqrt{z}}%
\exp \left( \frac{z}{2}\right) \,D_{-2a}\left( \sqrt{2z}\right) .
\label{Tricomi_reduction}
\end{equation}

The exponential integral is defined as \cite[Eqn. 6.2.1]{NIST}
\begin{equation*}
\mathrm{E}_{1}\left( z\right) =\int_{z}^{\infty }\frac{e^{-t}}{t}dt,\quad
z\neq 0,
\end{equation*}%
and the entire function denoted as the complementary exponential integral is
defined as \cite[Eqn. 6.2.3]{NIST}:
\begin{equation}
\mathrm{Ein}\left( z\right) =\int_{0}^{z}\frac{1-e^{-t}}{t}dt.
\label{Ein_def}
\end{equation}%
The power-series expansion of the complementary exponential integral is
\begin{equation}
\mathrm{Ein}\left( z\right) =\sum_{n=1}^{\infty }\frac{\left( -1\right)
^{n-1}z^{n}}{n! \, n}.  \label{Ein_series}
\end{equation}%
The relation between the exponential integral and the complementary
exponential integral is given by \cite[Eqn. 6.2.4]{NIST}:%
\begin{equation}
\mathrm{Ein}\left( z\right) =\mathrm{E}_{1}\left( z\right) +\log z+\gamma ,
\label{Ein_E1}
\end{equation}%
where $\gamma $ is the Euler-Mascheroni constant. Since the exponential
integral has the asymptotic expansion:
\begin{equation}
\mathrm{E}_{1}\left( z\right) \approx \frac{e^{-z}}{z}\sum_{n=0}^{\infty }%
\frac{\left( -1\right) ^{n}n!}{z^{n}},\quad z\rightarrow \infty ,\
\left\vert \arg z\right\vert <\frac{3\pi }{2},  \label{E1_asymptotic}
\end{equation}%
from (\ref{Ein_E1}) and (\ref{E1_asymptotic}), we obtain the asymptotic
expansion:%
\begin{equation}
\mathrm{Ein}\left( z\right) \approx \log z+\gamma ,\quad z\rightarrow \infty
.  \label{Ein(z)_asymptotic}
\end{equation}

The two-parameter Mittag-Leffler function is defined as \cite[Eqn. 18.1(19)]%
{Bateman}:
\begin{equation}
\mathrm{E}_{\mu ,\nu }\left( z\right) =\sum_{k=0}^{\infty }\frac{z^{k}}{%
\Gamma \left( \mu \,k+\nu \right) },\quad \Re \left( \mu \right) >0.
\label{ML_def}
\end{equation}%
According to \cite[Sect. 45:14]{Atlas},%
\begin{equation}
\mathrm{E}_{1/2,1}\left( z\right) =\exp \left( z^{2}\right) \,\mathrm{erfc}%
\left( -z\right) .  \label{ML_1/2,1}
\end{equation}%
A useful inverse Laplace formula that involves the Mittag-Leffler function
is the following one \cite[Eqn. 45:14:4]{Atlas}:%
\begin{equation}
\mathcal{L}^{-1}\left[ \frac{s^{\mu -\nu }}{s^{\mu }-b};t\right] =t^{\nu -1}%
\mathrm{E}_{\mu ,\nu }\left( b\,t^{\mu }\right) .  \label{L-1[ML]}
\end{equation}%
\qquad

The modified Bessel function is defined as \cite[Eqn. 5.7.1]{Lebedev}:%
\begin{equation}
I_{\nu }\left( z\right) =\sum_{k=0}^{\infty }\frac{\left( z/2\right) ^{\nu
+2k}}{k!\,\Gamma \left( k+\nu +1\right) },\quad \left\vert z\right\vert
<\infty ,\quad \left\vert \arg z\right\vert <\pi ,  \label{I_Bessel_def}
\end{equation}%
and the Macdonald function is defined as \cite[Eqn. 5.7.2]{Lebedev}:
\begin{equation}
K_{\nu }\left( z\right) =\frac{\pi }{2}\left( \frac{I_{-\nu }\left( z\right)
-I_{\nu }\left( z\right) }{\sin \pi \nu }\right) ,\quad \left\vert \arg
z\right\vert <\pi ,\quad \nu \neq 0,\pm 1,\pm 2,\ldots  \label{K_Bessel_def}
\end{equation}%
From the definition of the Macdonald function, it is clear that%
\begin{equation}
K_{-\nu }\left( z\right) =K_{\nu }\left( z\right) .  \label{K_nu=K_-nu}
\end{equation}

Finally, the following reduction formulas hold true \cite[Eqn.
7.11.2(10)\&(11)]{Prudnikov3}:%
\begin{eqnarray}
_{1}F_{1}\left(
\begin{array}{c}
\frac{1}{2} \\
1%
\end{array}%
;z\right) &=&e^{z/2}\,I_{0}\left( \frac{z}{2}\right) ,
\label{1F1_reduction_3} \\
_{1}F_{1}\left(
\begin{array}{c}
\frac{1}{2} \\
\frac{3}{2}%
\end{array}%
;z\right) &=&\frac{1}{2}\sqrt{\frac{\pi }{z}}\,\mathrm{erfi}\left( \sqrt{z}%
\right) ,  \label{1F1_reduction_4b}
\end{eqnarray}

\section{Main results \label{Section: Main Results}}

\subsection{Exponential function}

\begin{theorem}
\label{Theorem: Laplace 1} For $\Re \left( a^{2}\right) >0 $ and $\Re \left(
b^{2}\right) >0$, the following convolution integral holds:%
\begin{equation}
\int_{0}^{t}\frac{\exp \left( -\frac{a^{2}}{\tau }-\frac{b^{2}}{t-\tau }%
\right) \,}{\sqrt{\tau \left( t-\tau \right) }}\,d\tau =\pi \,\mathrm{erfc}%
\left( \frac{a+b}{\sqrt{t}}\right) .  \label{Convolution_erfc}
\end{equation}
\end{theorem}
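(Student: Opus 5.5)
The plan is to apply the convolution theorem (\ref{Convolution_Laplace}) with the classical Laplace pair
\begin{equation*}
\mathcal{L}\left[ \frac{1}{\sqrt{\pi t}}\exp \left( -\frac{a^{2}}{t}\right) ;s\right] =\frac{e^{-2a\sqrt{s}}}{\sqrt{s}},\qquad \Re \left( a^{2}\right) >0,
\end{equation*}
which I would take from standard tables such as \cite{Prudnikov4} or \cite{Oberhettinger}. The branch of $a$ is chosen so that $\Re \left( a\right) >0$. To check the pair, expand $\sqrt{s}\,t+a^{2}/t$ around its saddle point, or differentiate the Gaussian integral $\int_{0}^{\infty }\exp \left( -s t-a^{2}/t\right) t^{-3/2}dt=\frac{\sqrt{\pi }}{a}e^{-2a\sqrt{s}}$ with respect to $a$.

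With $f\left( \tau \right) =\exp \left( -a^{2}/\tau \right) /\sqrt{\pi \tau }$ and $g\left( \tau \right) =\exp \left( -b^{2}/\tau \right) /\sqrt{\pi \tau }$, the left-hand side of (\ref{Convolution_erfc}) equals $\pi \left( f\ast g\right) \left( t\right) $. By (\ref{Convolution_Laplace}), its Laplace transform is
\begin{equation*}
\pi \,\frac{e^{-2\left( a+b\right) \sqrt{s}}}{s}.
\end{equation*}
The second step uses the equally classical inverse $\mathcal{L}^{-1}\left[ e^{-2c\sqrt{s}}/s;t\right] =\mathrm{erfc}\left( c/\sqrt{t}\right) $ with $c=a+b$. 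This can be derived by integrating the first pair in $t$: since $e^{-2c\sqrt{s}}/s$ is $\frac{1}{\sqrt{s}}\cdot \frac{e^{-2c\sqrt{s}}}{\sqrt{s}}$, one can alternatively note $\frac{d}{dt}\mathrm{erfc}\left( c/\sqrt{t}\right) =\frac{c}{\sqrt{\pi }}t^{-3/2}e^{-c^{2}/t}$, whose transform is $e^{-2c\sqrt{s}}$. The derivative theorem (\ref{Derivative_Laplace}) with $\mathrm{erfc}\left( c/\sqrt{0^{+}}\right) =0$ then gives the claim. Uniqueness of the inverse Laplace transform (Lerch) for continuous functions yields (\ref{Convolution_erfc}).

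The main obstacle is the hypothesis bookkeeping for complex $a,b$. The convolution theorem requires piecewise continuity and exponential order. With $\Re \left( a^{2}\right) >0$, $f$ extends continuously by $0$ at $\tau =0$ and is bounded, so this is fine. The tabulated pair needs $\Re \left( a^{2}\right) >0$ and the principal choice $\Re \left( a\right) >0$. The condition on $c=a+b$ then yields $\Re \left( c\right) >0$, though not necessarily $\Re \left( c^{2}\right) >0$. I would handle this by proving the identity first for $a,b>0$ and then extending by analytic continuation in $a$ and $b$: both sides are analytic in the region $\Re \left( a^{2}\right) ,\Re \left( b^{2}\right) >0$, the left side by dominated convergence. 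This also avoids any delicacy in the $\mathrm{erfc}$ inversion with complex argument.
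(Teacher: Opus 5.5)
Your proposal is correct and is essentially the paper's proof. The paper also convolves two copies of $\mathcal{L}^{-1}\left[ e^{-a\sqrt{s}}/\sqrt{s};t\right] =\frac{1}{\sqrt{\pi t}}e^{-a^{2}/(4t)}$ and reads $\mathcal{L}^{-1}\left[ e^{-(a+b)\sqrt{s}}/s;t\right]$ off the tables as an $\mathrm{erfc}$, but it rescales $a\mapsto 2a$, $b\mapsto 2b$ only at the end, whereas you rescale first and add justifications of the pairs, the branch and uniqueness.

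Three minor remarks. Your choice $\Re (a),\Re (b)>0$ is genuinely needed, since the left side depends only on $a^{2},b^{2}$. Together with $\Re (a^{2}),\Re (b^{2})>0$ it forces $|\arg a|,|\arg b|<\pi /4$, so $\Re ((a+b)^{2})>0$ holds automatically and the analytic-continuation detour is optional. Finally, to get the $t^{-1/2}$ pair from the $t^{-3/2}$ integral you must differentiate in $s$ (or integrate in $a$), not differentiate in $a$.
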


\begin{proof}
Apply the Laplace convolution theorem (\ref{Convolution_Laplace}), taking
into account the following inverse Laplace transforms \cite[Eqns.
2.2.2(15-16)]{Prudnikov5}:%
\begin{eqnarray*}
\mathcal{L}^{-1}\left[ \frac{\exp \left( -a\sqrt{s}\right) }{\sqrt{s}};t%
\right] &=&\frac{1}{\sqrt{\pi t}}\exp \left( -\frac{a^{2}}{4t}\right) ,\quad
\Re \left( a^{2}\right) >0 \\
\mathcal{L}^{-1}\left[ \frac{\exp \left( -a\sqrt{s}\right) }{s};t\right] &=&%
\mathrm{erfc}\left( \frac{a}{2\sqrt{t}}\right) ,\quad \Re \left(
a^{2}\right) >0,
\end{eqnarray*}%
to obtain%
\begin{equation*}
\frac{1}{\pi }\int_{0}^{t}\frac{\exp \left( -\frac{a^{2}}{4\tau }-\frac{b^{2}%
}{4\left( t-\tau \right) }\right) \,}{\sqrt{\tau \left( t-\tau \right) }}%
\,d\tau =\mathcal{L}^{-1}\left[ \frac{\exp \left( -\left( a+b\right) \sqrt{s}%
\right) }{s};t\right] =\mathrm{erfc}\left( \frac{a+b}{2\sqrt{t}}\right) .
\end{equation*}%
%
%
Perform the substitutions $a\mapsto 2a$ and $b\mapsto 2b$ to complete the
proof.
\end{proof}

\begin{example}
The integral (\ref{Convolution_erfc})\ appears in heat conduction in a
semi-infinite medium initially at zero temperature and with a prescribed
heat flux on the surface.

\begin{enumerate}
\item Physical setup \newline
Consider a semi-infinite medium of thermal diffusivity $\kappa $, and
thermal conductivity $K$, located at $x\geq 0$. Also, consider an outer
layer of thermal diffusivity $\kappa _{0}$ and thermal conductivity $K$,
located at $-L\leq x\leq 0$. Both media are initially at zero temperature,
and the outer layer is exposed to a temperature $T_{0}$ for $t>0$. According
to \cite[Sect. 12.8]{Jaeger}, if the thermal effusivities of both media are
equal, i.e.
\begin{equation*}
E_{0}=\frac{K_{0}}{\sqrt{\kappa _{0}}}=\frac{K}{\sqrt{\kappa }}=E,
\end{equation*}%
the heat flux at $x=0$ is given by
\begin{equation*}
q\left( t\right) =\frac{q_{0}}{\sqrt{t}}\exp \left( -\frac{a^{2}}{t}\right)
,\quad q_{0}=\frac{K_{0}T_{0}}{\sqrt{\pi \,\kappa _{0}}},\quad a=\frac{L}{2%
\sqrt{\kappa _{0}}}.
\end{equation*}%
Moreover, if we consider $t\ll \frac{L^{2}}{4\kappa _{0}}$, we have%
\begin{equation*}
q\left( t\right) \approx \frac{q_{s}}{\sqrt{t}}\exp \left( -\frac{a^{2}}{t}%
\right) ,\quad q_{s}=\frac{2EE_{0}T_{0}}{\sqrt{\pi }\left( E_{0}+E\right) }.
\end{equation*}

\item Governing equations (semi-infinite medium)

\begin{itemize}
\item Heat equation
\begin{equation}
\frac{\partial T\left( x,t\right) }{\partial t}=\kappa \frac{\partial
^{2}T\left( x,t\right) }{\partial x^{2}},\quad x>0,\,t>0.
\label{Heat_equation_1}
\end{equation}

\item Boundary conditions
\begin{equation}
-K\left. \frac{\partial T\left( x,t\right) }{\partial x}\right\vert
_{x=0}=q\left( t\right) ,  \label{BC_1}
\end{equation}%
\begin{equation}
\lim_{x\rightarrow \infty }T\left( x,t\right) =0.  \label{BC_2}
\end{equation}

\item Initial condition
\begin{equation}
T(x,0)=0.  \label{IC_1}
\end{equation}
\end{itemize}

\item As shown in \cite[Sect. 2.9]{Jaeger}, the solution of (\ref%
{Heat_equation_1})-(\ref{IC_1}) is given by the convolution integral:
\begin{equation*}
T\left( x,t\right) =\frac{\sqrt{\kappa }}{\sqrt{\pi }K}\int_{0}^{t}\frac{%
q\left( \tau \right) }{\sqrt{t-\tau }}\exp \left( -\frac{b^{2}}{t-\tau }%
\right) \,d\tau ,\quad b=\frac{x}{2\sqrt{\kappa }}.
\end{equation*}%
Therefore, applying the convolution integral given in (\ref{Convolution_erfc}%
), for the case $E=E_{0}$, we obtain:%
\begin{equation*}
T\left( x,t\right) =T_{0}\,\mathrm{erfc}\left( \frac{x}{2\sqrt{\kappa t}}+%
\frac{L}{2\sqrt{\kappa _{0}t}}\right) ,
\end{equation*}%
and for the case $t\ll \frac{L^{2}}{4\kappa _{0}}$, we have%
\begin{equation*}
T\left( x,t\right) \approx \frac{2E_{0}T_{0}}{E_{0}+E}\,\mathrm{erfc}\left(
\frac{x}{2\sqrt{\kappa t}}+\frac{L}{2\sqrt{\kappa _{0}t}}\right) .
\end{equation*}
\end{enumerate}
\end{example}

\begin{corollary}
Take $a=1$ and $b=0$ in (\ref{Convolution_erfc}), and perform the changes of
variables $z=t^{-1/2}$ and $\tau ^{-1}=z^{2}u$ to obtain the following
integral representation for $\left\vert \arg \left( z\right) \right\vert <%
\frac{\pi }{4}$:%
\begin{equation}
\mathrm{erfc}\left( z\right) =\frac{1}{\pi }\int_{1}^{\infty }\frac{\exp
\left( -z^{2}u\right) }{u\sqrt{u-1}}\,du.
\label{erfc_integral_representation}
\end{equation}
\end{corollary}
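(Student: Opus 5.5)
Set $a=1$ and $b=0$ in (\ref{Convolution_erfc}). The hypothesis $\Re(b^2)>0$ fails at $b=0$, so we first check that the identity survives the limit $b\to 0^{+}$. For $b=0$ the integrand is
\begin{equation*}
\frac{\exp\left( -\frac{1}{\tau}\right)}{\sqrt{\tau (t-\tau)}} .
\end{equation*}
It is integrable near $\tau=t$, where it behaves like $(t-\tau)^{-1/2}$, and bounded near $\tau=0$. It also dominates the integrand for every $b>0$. Dominated convergence therefore lets us pass to the limit on the left, and continuity of $\mathrm{erfc}$ handles the right. The result, for real $t>0$, is
\begin{equation*}
\int_{0}^{t}\frac{\exp\left( -\frac{1}{\tau}\right)}{\sqrt{\tau(t-\tau)}}\,d\tau=\pi\,\mathrm{erfc}\left(\frac{1}{\sqrt{t}}\right).
\end{equation*}
Alternatively, the case $b=0$ follows directly from the convolution theorem (\ref{Convolution_Laplace}). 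Use $\mathcal{L}^{-1}[s^{-1/2};t]=(\pi t)^{-1/2}$ together with the two inverse transforms quoted in the proof of Theorem \ref{Theorem: Laplace 1}.

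Next change variables. Put $z=t^{-1/2}$, so $t=z^{-2}$, and $\tau=1/(z^{2}u)$. As $\tau$ runs over $(0,t)$, $u$ runs over $(\infty,1)$, and $d\tau=-du/(z^{2}u^{2})$. The pieces transform as
\begin{equation*}
\tau(t-\tau)=\frac{1}{z^{2}u}\left(\frac{1}{z^{2}}-\frac{1}{z^{2}u}\right)=\frac{u-1}{z^{4}u^{2}},
\qquad
\sqrt{\tau(t-\tau)}=\frac{\sqrt{u-1}}{z^{2}u},
\end{equation*}
and $\exp(-1/\tau)=\exp(-z^{2}u)$. The integrand times $d\tau$ becomes
\begin{equation*}
\frac{\exp(-z^{2}u)\,z^{2}u}{\sqrt{u-1}}\cdot\frac{du}{z^{2}u^{2}}=\frac{\exp(-z^{2}u)}{u\sqrt{u-1}}\,du .
\end{equation*}
After reversing the limits and dividing by $\pi$, this gives (\ref{erfc_integral_representation}) for $z>0$.

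The main obstacle is extending the result from $z>0$ to the sector $|\arg z|<\pi/4$, and I would handle it by analytic continuation. In that sector $\Re(z^{2})>0$. The integral then converges absolutely: the endpoint $u=1$ is integrable, and for large $u$ the integrand is $O\left(u^{-3/2}e^{-\Re(z^{2})u}\right)$. The convergence is locally uniform on compact subsets, so the right-hand side is analytic in $z$ there. The function $\mathrm{erfc}$ is entire. Both sides therefore agree on the sector by the identity theorem, since they coincide on the positive real axis.
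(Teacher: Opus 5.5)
Your proof is correct and follows the same route as the paper, which simply sets $a=1$, $b=0$ in (\ref{Convolution_erfc}) and substitutes $z=t^{-1/2}$, $\tau^{-1}=z^{2}u$. You add two justifications the paper leaves implicit: a dominated-convergence argument for the boundary case $b=0$, which the hypothesis $\Re(b^{2})>0$ formally excludes, and an identity-theorem extension from $z>0$ to the connected sector $|\arg z|<\pi/4$; the latter is rightly restricted to that component, since the right-hand side depends only on $z^{2}$.
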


\begin{theorem}
For $\Re \left( a^{2}\right) ,\Re \left( b^{2}\right) >0$, and $n=1,2,\ldots
$ the following convolution integral holds true:%
\begin{eqnarray}
&&\int_{0}^{t}\frac{\exp \left( -\frac{a^{2}}{\tau }-\frac{b^{2}}{t-\tau }%
\right) \,}{\left[ \tau \left( t-\tau \right) \right] ^{n+1/2}}\,d\tau
\label{Convolution_erfc_n} \\
&=&2\sqrt{\pi }\,e^{-\left( a+b\right) ^{2}/t}\,\sum_{k=1}^{n}\sum_{\ell
=1}^{n}\frac{c_{n,k}\,c_{n,\ell }}{a^{2n-k}b^{2n-\ell }t^{\left( k+\ell
\right) /2}}\,H_{k+\ell -1}\left( \frac{a+b}{\sqrt{t}}\right)
\label{Proof_1} \\
&=&\frac{2\,e^{-\left( a^{2}+b^{2}\right) /t}}{t^{2n}}\,\sum_{k=0}^{2n-1}%
\binom{2n-1}{k}\left( \frac{a}{b}\right) ^{k-n+1/2}K_{k-n+1/2}\left( \frac{%
2ab}{t}\right) ,  \label{Proof_2}
\end{eqnarray}%
where%
\begin{equation}
c_{n,m}=\frac{\left( 2n-m-1\right) !}{2^{2n-k}\,\left( n-m\right) !\,\left(
m-1\right) !}.  \label{c_n,m_def}
\end{equation}
\end{theorem}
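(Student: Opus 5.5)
The plan has two independent parts, one for each closed form. The Hermite form (\ref{Proof_1}) comes from differentiating Theorem \ref{Theorem: Laplace 1} with respect to its parameters. The Bessel form (\ref{Proof_2}) comes from a direct change of variables and a known integral for the Macdonald function.

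\textbf{Hermite form (\ref{Proof_1}).} Set $A=a^{2}$ and $B=b^{2}$, so that (\ref{Convolution_erfc}) reads
\begin{equation*}
\int_{0}^{t}\frac{e^{-A/\tau -B/(t-\tau )}}{\sqrt{\tau (t-\tau )}}\,d\tau =\pi \,\mathrm{erfc}\left( \frac{\sqrt{A}+\sqrt{B}}{\sqrt{t}}\right) .
\end{equation*}

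\begin{enumerate}
\item Apply $\partial _{A}^{n}\partial _{B}^{n}$ to both sides. For $\Re A,\Re B>0$ the exponential factor kills the singularities at $\tau =0$ and $\tau =t$, so differentiating under the integral sign is justified by dominated convergence. Each $\partial _{A}$ brings down $-1/\tau $ and each $\partial _{B}$ brings down $-1/(t-\tau )$. The signs $(-1)^{2n}$ cancel, and the left side becomes exactly the integral in (\ref{Convolution_erfc_n}).

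\item For the right side, use the chain-rule formula for a composition with the square root:
\begin{equation*}
\frac{d^{n}}{dA^{n}}g(\sqrt{A})=\sum_{k=1}^{n}(-1)^{n-k}\frac{(2n-k-1)!}{2^{2n-k}(n-k)!\,(k-1)!}\,A^{-(2n-k)/2}\,g^{(k)}(\sqrt{A}).
\end{equation*}
This can be proved by induction on $n$, or read off from Faà di Bruno's formula since all derivatives of $\sqrt{A}$ are explicit. The coefficient is $c_{n,k}$ of (\ref{c_n,m_def}), up to the sign.

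\item Apply this formula in $A$ and then in $B$ to $g=\mathrm{erfc}((a+b)/\sqrt{t})$. Each derivative in $a$ or in $b$ acts on the argument $(a+b)/\sqrt{t}$ and produces a factor $t^{-1/2}$. The mixed term is therefore $t^{-(k+\ell )/2}\,\mathrm{erfc}^{(k+\ell )}$.

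\item Evaluate $\mathrm{erfc}^{(k+\ell )}=-\mathrm{erf}^{(k+\ell )}$ with (\ref{D^n[erf]}). This gives $(-1)^{k+\ell }\frac{2}{\sqrt{\pi }}e^{-(a+b)^{2}/t}H_{k+\ell -1}$. The factor $\pi \cdot 2/\sqrt{\pi }=2\sqrt{\pi }$ then gives the stated prefactor.
\end{enumerate}

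\textbf{Main obstacle.} The delicate part is the bookkeeping in steps 2--4. I must check that the signs $(-1)^{n-k}(-1)^{n-\ell }(-1)^{k+\ell }$ collapse to $+1$. I must also reconcile the power of $2$ in the denominator of (\ref{c_n,m_def}), where I read the exponent as $2n-m$. If the induction on the square-root chain rule gives a slightly different normalisation, I would fix it by checking $n=1$ directly. In that case $\partial _{A}=\frac{1}{2a}\partial _{a}$, so $c_{1,1}=\frac{1}{2}$.

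\textbf{Bessel form (\ref{Proof_2}).} Substitute $\tau =tx/(1+x)$ with $x\in (0,\infty )$. Then
\begin{equation*}
\tau (t-\tau )=\frac{t^{2}x}{(1+x)^{2}},\qquad d\tau =\frac{t\,dx}{(1+x)^{2}},
\end{equation*}
and
\begin{equation*}
\frac{a^{2}}{\tau }+\frac{b^{2}}{t-\tau }=\frac{a^{2}+b^{2}}{t}+\frac{1}{t}\left( \frac{a^{2}}{x}+b^{2}x\right) .
\end{equation*}
The integral becomes
\begin{equation*}
\frac{e^{-(a^{2}+b^{2})/t}}{t^{2n}}\int_{0}^{\infty }(1+x)^{2n-1}\,x^{-n-1/2}\exp \left( -\frac{a^{2}}{tx}-\frac{b^{2}x}{t}\right) dx.
\end{equation*}
Expand $(1+x)^{2n-1}$ by the binomial theorem and integrate term by term using the standard Laplace-type integral
\begin{equation*}
\int_{0}^{\infty }x^{\nu -1}e^{-p/x-qx}\,dx=2\left( \frac{p}{q}\right) ^{\nu /2}K_{\nu }\left( 2\sqrt{pq}\right) ,\qquad \Re p,\Re q>0.
\end{equation*}
Take $\nu =k-n+\frac{1}{2}$, $p=a^{2}/t$ and $q=b^{2}/t$. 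This yields $2(a/b)^{k-n+1/2}K_{k-n+1/2}(2ab/t)$, which is exactly (\ref{Proof_2}). By (\ref{K_nu=K_-nu}) the negative orders cause no trouble. This part is routine once the substitution is chosen.
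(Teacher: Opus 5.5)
Your proposal is correct and follows the paper's proof: since $\partial_{A}=\frac{1}{2a}\partial_{a}$, your operator $\partial_{A}^{n}\partial_{B}^{n}$ is exactly the paper's $D_{a,b}^{n}$, your square-root chain rule is the paper's expansion of $\left(\frac{1}{2x}\frac{\partial}{\partial x}\right)^{n}$ (and you are right that the power of $2$ in $c_{n,m}$ should read $2^{2n-m}$), and your Bessel form uses the same substitution $u=\tau/(t-\tau)$, binomial expansion and Macdonald-function integral. Your sign collapse $(-1)^{n-k}(-1)^{n-\ell}(-1)^{k+\ell}=1$ and your stated chain-rule coefficients are both correct, so no normalisation fix is needed.
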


\begin{proof}
To prove (\ref{Proof_1}), define the differential operator:%
\begin{equation}
D_{a,b}^{n}=\left( \frac{1}{4ab}\frac{\partial ^{2}}{\partial a\,\partial b}%
\right) ^{n}=\left( \frac{1}{2a}\frac{\partial }{\partial a}\right)
^{n}\left( \frac{1}{2b}\frac{\partial }{\partial b}\right) ^{n}.
\label{D_a,b_def}
\end{equation}%
According to \cite{OEIS}, and performing the change $\ell =n-k$, we obtain%
\begin{eqnarray}
\left( \frac{1}{2x}\frac{\partial }{\partial x}\right) ^{n} &=&\frac{1}{2^{n}%
}\sum_{k=0}^{n-1}\frac{\left( n-1+k\right) !}{2^{k}\,\left( n-1-k\right)
!\,k!}\frac{\left( -1\right) ^{k}}{x^{n+k}}\left( \frac{\partial }{\partial x%
}\right) ^{n-k}  \label{(1/2x Dx)^n} \\
&=&\sum_{\ell =1}^{n}\frac{\left( 2n-\ell -1\right) !}{2^{2n-\ell }\left(
\ell -1\right) !\,\left( n-\ell \right) !}\frac{\left( -1\right) ^{n-\ell }}{%
x^{2n-\ell }}\left( \frac{\partial }{\partial x}\right) ^{\ell }.  \notag
\end{eqnarray}%
Therefore,
\begin{eqnarray}
D_{a,b}^{n} &=&\sum_{k=1}^{n}\sum_{\ell =1}^{n}\frac{\left( -1\right)
^{k+n}\,c_{n,k}\,c_{n,\ell }}{a^{2n-k}\,b^{2n-\ell }}\left( \frac{\partial }{%
\partial a}\right) ^{k}\left( \frac{\partial }{\partial b}\right) ^{\ell }
\label{D_a,b_LEFT} \\
&=&\sum_{k=1}^{n}\sum_{\ell =1}^{n}\frac{\left( -1\right)
^{k+n}\,c_{n,k}\,c_{n,\ell }}{a^{2n-k}\,b^{2n-\ell }}\left( \frac{\partial }{%
\partial \left( a+b\right) }\right) ^{k+\ell }.  \notag
\end{eqnarray}%
where the combinatorial coefficients $c_{n,m}$ are given by (\ref{c_n,m_def}%
). On the one hand, applying (\ref{D_a,b_def}), we have%
\begin{equation}
D_{a,b}^{n}\left[ \frac{\exp \left( -\frac{a^{2}}{\tau }-\frac{b^{2}}{t-\tau
}\right) \,}{\sqrt{\tau \left( t-\tau \right) }}\right] =\frac{\exp \left( -%
\frac{a^{2}}{\tau }-\frac{b^{2}}{t-\tau }\right) \,}{\left[ \tau \left(
t-\tau \right) \right] ^{n+1/2}}.  \label{D^n_a,b_LEFT}
\end{equation}%
On the other hand, applying (\ref{D_a,b_LEFT}), and taking into account (\ref%
{erfc_def}) and (\ref{D^n[erf]}), we obtain%
\begin{eqnarray}
&&D_{a,b}^{n}\left[ \pi \,\mathrm{erfc}\left( \frac{a+b}{\sqrt{t}}\right) %
\right]  \label{D^n_a,b_RIGHT} \\
&=&-\pi \,D_{a,b}^{n}\left[ \,\mathrm{erf}\left( \frac{a+b}{\sqrt{t}}\right) %
\right]  \notag \\
&=&2\sqrt{\pi }\,e^{-\left( a+b\right) ^{2}/t}\,\sum_{k=1}^{n}\sum_{\ell
=1}^{n}\frac{c_{n,k}\,c_{n,\ell }}{a^{2n-k}b^{2n-\ell }t^{\left( k+\ell
\right) /2}}\,H_{k+\ell -1}\left( \frac{a+b}{\sqrt{t}}\right) .  \notag
\end{eqnarray}%
Finally, apply the differential operator $D_{a,b}^{n}$ to both sides of (\ref%
{Convolution_erfc}), and take into account (\ref{D^n_a,b_LEFT})\ and (\ref%
{D^n_a,b_RIGHT})\ to complete the first part of the proof, i.e. (\ref%
{Proof_1}).

For the second part of the proof, i.e. (\ref{Proof_2}), perform the change
of variables $u=\frac{\tau }{t-\tau }$ in (\ref{Convolution_erfc_n})\ to
obtain%
\begin{eqnarray*}
&&\int_{0}^{t}\frac{\exp \left( -\frac{a^{2}}{\tau }-\frac{b^{2}}{t-\tau }%
\right) \,}{\left[ \tau \left( t-\tau \right) \right] ^{n+1/2}}\,d\tau \\
&=&\frac{e^{-\left( a^{2}+b^{2}\right) /t}}{t^{2n}}\,\int_{0}^{\infty
}u^{-n-1/2}\,\left( 1+u\right) ^{2n-1}\exp \left( -\frac{b^{2}+a^{2}/u}{t}%
\right) \,du.
\end{eqnarray*}%
Now, according to the binomial theorem, we have
\begin{equation*}
\left( 1+u\right) ^{2n-1}=\sum_{k=0}^{2n-1}\binom{2n-1}{k}u^{k}.
\end{equation*}%
Thus%
\begin{eqnarray*}
&&\int_{0}^{t}\frac{\exp \left( -\frac{a^{2}}{\tau }-\frac{b^{2}}{t-\tau }%
\right) \,}{\left[ \tau \left( t-\tau \right) \right] ^{n+1/2}}\,d\tau \\
&=&\frac{e^{-\left( a^{2}+b^{2}\right) /t}}{t^{2n}}\,\sum_{k=0}^{2n-1}\binom{%
2n-1}{k}\int_{0}^{\infty }u^{k-n-1/2}\,\exp \left( -\frac{b^{2}+a^{2}/u}{t}%
\right) \,du.
\end{eqnarray*}%
Finally, apply the following known integral \cite[Eqn. 2.3.16(1)]{Prudnikov1}%
:
\begin{equation*}
\int_{0}^{\infty }u^{\alpha -1}\,\exp \left( -pu-\frac{q}{u}\right)
\,du=2\left( \frac{p}{q}\right) ^{\alpha /2}K_{\alpha }\left( 2\sqrt{p\,q}%
\right) ,
\end{equation*}%
to complete the second part of the proof.
\end{proof}

\begin{remark}
Straightforwardly from (\ref{Proof_1}), the convolution integral (\ref%
{Convolution_erfc_n})\ is expressed in terms of elementary functions. This
is also clear from (\ref{Proof_2})\ taking into account the property (\ref%
{K_nu=K_-nu})\ and the following expansion of the Macdonald function for
half-integer order \cite[Eqn. 8.468]{Gradshteyn}:%
\begin{equation*}
K_{n+1/2}\left( z\right) =\sqrt{\frac{\pi }{2z}}e^{-z}\sum_{k=0}^{n}\frac{%
\left( n+k\right) !}{k!\left( n-k\right) !\,\left( 2z\right) ^{k}}.
\end{equation*}%
For instance, for $n=1$, (\ref{Convolution_erfc_n})\ reduces to%
\begin{equation}
\int_{0}^{t}\frac{\exp \left( -\frac{a^{2}}{\tau }-\frac{b^{2}}{t-\tau }%
\right) }{\left[ \tau \left( t-\tau \right) \right] ^{3/2}}\,\,d\tau =\frac{%
\sqrt{\pi }\left( a+b\right) }{a\,b\,t^{3/2}}\exp \left( -\frac{\left(
a+b\right) ^{2}}{t}\right) ,  \label{Convolution_erfc_n=1}
\end{equation}%
and for $n=2$, we have%
\begin{eqnarray}
&&\int_{0}^{t}\frac{\exp \left( -\frac{a^{2}}{\tau }-\frac{b^{2}}{t-\tau }%
\right) }{\left[ \tau \left( t-\tau \right) \right] ^{5/2}}\,\,d\tau
\label{Convolution_erfc_n=2} \\
&=&\frac{\sqrt{\pi }\left[ 2ab\left( a+b\right) ^{3}+\left(
a^{3}+b^{3}\right) t\right] }{2a^{3}b^{3}\,t^{7/2}}\exp \left( -\frac{\left(
a+b\right) ^{2}}{t}\right) .  \notag
\end{eqnarray}
\end{remark}

\begin{example}
The integral (\ref{Convolution_erfc_n=1}) appears in heat conduction in a
semi-infinite medium initially at zero temperature, with a prescribed
temperature evolution on its surface.

\begin{enumerate}
\item Physical setup\newline
Consider a semi-infinite rod located along $x\geq 0$ and immersed in an
insulating material. A localized laser pulse discharges a sudden, massive
burst of energy at a single point deep within the insulating material at
time $t=0$ and at distance $r_{0}$ from the surface of the rod, which is
located at $x=0$. The temperature diffuses outward from the point source
spherically. At a distance $r$ from the point source, the temperature
evolves according to \cite[Sect. 10.2]{Jaeger}:%
\begin{equation*}
T_{\text{source}}\left( r,t\right) =\frac{Q}{\rho \,c_{p}\left( 4\pi
\,\kappa _{0}t\right) ^{3/2}}\exp \left( -\frac{r^{2}}{4\kappa _{0}t}\right)
,
\end{equation*}%
where $Q$ is the energy released, $\rho $ is the density of the insulating
material, $c_{p}$ is its specific heat capacity, and $\kappa _{0}$ is the
thermal diffusivity of the insulating material. Therefore, the temperature
on the surface of the rod evolves as $T\left( 0,t\right) =T_{\text{source}%
}\left( r_{0},t\right) $.

\item Governing equations (in semi-infinite medium)

\begin{itemize}
\item Heat equation
\begin{equation}
\frac{\partial T\left( x,t\right) }{\partial t}=\kappa \frac{\partial
^{2}T\left( x,t\right) }{\partial x^{2}},\quad x>0,\,t>0.
\label{Heat_Equation_source}
\end{equation}

\item Boundary conditions%
\begin{equation}
T\left( 0,t\right) =\frac{q}{t^{3/2}}\exp \left( -\frac{a^{2}}{t}\right)
,\quad q=\frac{Q}{\rho \,c_{p}\left( 4\pi \kappa _{0}\right) ^{3/2}},\quad a=%
\frac{r_{0}}{2\sqrt{\kappa _{0}}}  \label{BC_source_0}
\end{equation}%
\begin{equation}
\lim_{x\rightarrow \infty }T\left( x,t\right) =0.  \label{BC_source_infinity}
\end{equation}

\item Initial condition
\begin{equation}
T(x,0)=0.  \label{IC_source}
\end{equation}
\end{itemize}

\item According to \cite[Sect. 2.5]{Jaeger}, the solution of the boundary
value problem (\ref{Heat_Equation_source})-(\ref{IC_source}) is given by the
convolution integral:
\begin{eqnarray*}
T\left( x,t\right) &=&\frac{b}{\sqrt{\pi }}\int_{0}^{t}\frac{T\left( 0,\tau
\right) }{\left( t-\tau \right) ^{3/2}}\exp \left( -\frac{b^{2}}{t-\tau }%
\right) \,d\tau ,\quad b=\frac{x}{2\sqrt{\kappa }} \\
&=&\frac{q\,b}{\sqrt{\pi }}\int_{0}^{t}\frac{\exp \left( -\frac{a^{2}}{\tau }%
-\frac{b^{2}}{t-\tau }\right) }{\left[ \tau \left( t-\tau \right) \right]
^{3/2}}\,\,d\tau .
\end{eqnarray*}%
Therefore, applying the convolution integral given in (\ref%
{Convolution_erfc_n=1}), we arrive at%
\begin{equation*}
T\left( x,t\right) =\frac{Q}{\rho \,c_{p}\left( 4\pi \,\kappa _{0}t\right)
^{3/2}}\left( 1+\frac{x}{r_{0}}\sqrt{\frac{\kappa _{0}}{\kappa }}\right)
\exp \left( -\frac{1}{4t}\left[ \frac{x}{\sqrt{\kappa }}+\frac{r_{0}}{\sqrt{%
\kappa _{0}}}\right] ^{2}\right) .
\end{equation*}
\end{enumerate}
\end{example}

\begin{theorem}
\label{Theorem: Laplace 2} For $\Re \left( \nu \right) >0$ and $a\in
\mathbb{C}
$, the following convolution integral holds true:%
\begin{equation}
\int_{0}^{t}\frac{1-\exp \left( -a\,\tau \right) }{\tau }\left( t-\tau
\right) ^{\nu }d\tau =\frac{a\,t^{\nu +1}}{\nu +1}\,_{2}F_{2}\left(
\begin{array}{c}
1,1 \\
2,\nu +2%
\end{array}%
;-a\,t\right) .  \label{Convolution_(1-exp)}
\end{equation}
\end{theorem}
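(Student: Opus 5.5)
The plan is to follow the paper's standard route and apply the convolution theorem (\ref{Convolution_Laplace}). I will take $f(\tau)=\left(1-e^{-a\tau}\right)/\tau$ and $g(\tau)=\tau^{\nu}$.

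For $g$ the transform is the classical one, $\mathcal{L}[\tau^{\nu};s]=\Gamma(\nu+1)/s^{\nu+1}$. For $f$ there are two routes.

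\emph{Known transform.} One can use the tabulated result $\mathcal{L}\left[(1-e^{-a\tau})/\tau;s\right]=\log\left(1+a/s\right)$. It follows from the Frullani-type identity $\int_s^\infty\left(\frac1p-\frac1{p+a}\right)dp$. With it, the right-hand side of (\ref{Convolution_(1-exp)}) becomes $\Gamma(\nu+1)\,\mathcal{L}^{-1}\left[s^{-\nu-1}\log(1+a/s);t\right]$. One then expands $\log(1+a/s)=\sum_{k\ge1}(-1)^{k-1}a^k/(k\,s^k)$ for $|s|>|a|$ and inverts termwise using $\mathcal{L}^{-1}[s^{-\nu-1-k}]=t^{\nu+k}/\Gamma(\nu+k+1)$.

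\emph{Series route (preferred).} It avoids any appeal to tables. Expand the integrand directly using the power series implicit in (\ref{Ein_series}):
\begin{equation*}
\frac{1-e^{-a\tau}}{\tau}=\sum_{k=1}^\infty\frac{(-1)^{k-1}a^k\tau^{k-1}}{k!}.
\end{equation*}
This series converges uniformly on $[0,t]$, so termwise integration is justified. Each term is a Beta integral (\ref{Beta_def}):
\begin{equation*}
\int_0^t\tau^{k-1}(t-\tau)^\nu d\tau=t^{k+\nu}\,\mathrm{B}(k,\nu+1)=t^{k+\nu}\frac{(k-1)!\,\Gamma(\nu+1)}{\Gamma(k+\nu+1)}.
\end{equation*}
Here $\Re\nu>0$ (indeed $\Re\nu>-1$ suffices) guarantees convergence. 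Both routes lead to the same sum,
\begin{equation*}
\sum_{k=1}^\infty\frac{(-1)^{k-1}a^k t^{k+\nu}\,\Gamma(\nu+1)}{k\,\Gamma(\nu+k+1)}.
\end{equation*}

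The remaining step is to recognize this sum as the stated ${}_2F_2$. Shift $k=m+1$. Then use $\Gamma(\nu+m+2)=\Gamma(\nu+2)(\nu+2)_m$ from (\ref{Pochhammer_def}), and $\Gamma(\nu+2)/\Gamma(\nu+1)=\nu+1$ from (\ref{Gamma_factorial}). Also write
\begin{equation*}
\frac{1}{m+1}=\frac{(1)_m(1)_m}{(2)_m\,m!},
\end{equation*}
which holds since $(1)_m=m!$ and $(2)_m=(m+1)!$. The sum becomes
\begin{equation*}
\frac{a\,t^{\nu+1}}{\nu+1}\sum_{m\ge0}\frac{(1)_m(1)_m}{(2)_m(\nu+2)_m}\frac{(-at)^m}{m!},
\end{equation*}
which is exactly (\ref{Convolution_(1-exp)}) by (\ref{pFq_def}).

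The main (mild) obstacle is this bookkeeping of Pochhammer symbols to produce the $(1,1;2,\nu+2)$ parameters and the prefactor $1/(\nu+1)$. Analytic issues are trivial: the series is entire in $a$, so the result holds for all $a\in\mathbb{C}$. If one uses the Laplace route instead, the identity first holds for $|s|>|a|$ and then extends by analytic continuation.
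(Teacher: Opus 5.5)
Your proposal is correct and follows the paper's proof. The paper also argues via the Laplace convolution theorem with $\mathcal{L}^{-1}[\log(1+a/s);t]=(1-e^{-at})/t$, citing from tables the ${}_2F_2$ inverse transform of $s^{-\nu}\log(1+a/s)$, which you derive by termwise inversion; it then gives the same alternative proof by expanding the exponential and integrating termwise. The only real difference is that you evaluate $\int_0^t\tau^{k-1}(t-\tau)^{\nu}\,d\tau$ directly as a Beta integral, which is shorter than the paper's binomial expansion plus Chu--Vandermonde, and there is one small slip: it is the left-hand side, not the right-hand side, that equals $\Gamma(\nu+1)\,\mathcal{L}^{-1}[s^{-\nu-1}\log(1+a/s);t]$.
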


\begin{proof}
Apply the Laplace convolution theorem (\ref{Convolution_Laplace}), taking
into account the inverse Laplace transforms \cite[Eqns.
2.5.3.(1)-(2)\&2.1.1(1)]{Prudnikov5}:%
\begin{eqnarray*}
\mathcal{L}^{-1}\left[ \log \left( 1+\frac{a}{s}\right) ;t\right] &=&\frac{%
1-\exp \left( -a\,t\right) }{t}, \\
\mathcal{L}^{-1}\left[ \frac{1}{s^{\nu }};t\right] &=&\frac{t^{\nu -1}}{%
\Gamma \left( \nu \right) }, \\
\mathcal{L}^{-1}\left[ \frac{1}{s^{\nu }}\log \left( 1+\frac{a}{s}\right) ;t%
\right] &=&\frac{a\,t^{\nu }}{\Gamma \left( \nu +1\right) }\,_{2}F_{2}\left(
\begin{array}{c}
1,1 \\
2,\nu +1%
\end{array}%
;-a\,t\right) .
\end{eqnarray*}%
Therefore,
\begin{eqnarray*}
\frac{1}{\Gamma \left( \nu \right) }\int_{0}^{t}\,\frac{1-\exp \left(
-a\,t\right) }{t}\,\left( t-\tau \right) ^{\nu -1}d\tau &=&\mathcal{L}^{-1}%
\left[ \frac{1}{s^{\nu }}\log \left( 1+\frac{a}{s}\right) ;t\right] \\
&=&\frac{a\,t^{\nu }}{\Gamma \left( \nu +1\right) }\,_{2}F_{2}\left(
\begin{array}{c}
1,1 \\
2,\nu +1%
\end{array}%
;-a\,t\right) .
\end{eqnarray*}%
Apply the property (\ref{Gamma_factorial})\ and perform the substitution $%
\nu \mapsto \nu +1$ to complete the proof.

Alternatively, we can derive (\ref{Convolution_(1-exp)})\ as follows. Expand
the exponential function to obtain
\begin{equation*}
\frac{1-\exp \left( -a\,\tau \right) }{\tau }=\sum_{n=0}^{\infty }\frac{%
\left( -a\tau \right) ^{n}}{\left( n+1\right) !},
\end{equation*}%
thus%
\begin{equation}
\int_{0}^{t}\frac{1-\exp \left( -a\,\tau \right) }{\tau }\left( t-\tau
\right) ^{\nu }d\tau =\sum_{n=0}^{\infty }\frac{\left( -a\right) ^{n}}{%
\left( n+1\right) !}\underset{I}{\underbrace{\int_{0}^{t}\tau ^{n}\left(
t-\tau \right) ^{\nu }d\tau }}.  \label{I_definition}
\end{equation}%
Note that, performing the substitution $\tau \mapsto t-\tau $ and taking
into account the binomial theorem, we have%
\begin{eqnarray}
I &=&\int_{0}^{t}\tau ^{\nu }\left( t-\tau \right) ^{n}d\tau  \label{J_def}
\\
&=&\sum_{k=0}^{n}\binom{n}{k}\left( -1\right) ^{k}t^{n-1}\int_{0}^{t}\tau
^{\nu +k}\,d\tau =t^{n+\nu +1}\sum_{k=0}^{n}\frac{\left( -1\right) ^{k}\,n!}{%
k!\left( n-k\right) !\left( \nu +k+1\right) }.  \notag
\end{eqnarray}%
However, from the Pochhammer symbol properties (\ref{Pochhammer_1}) and (\ref%
{Pochhammer_2}), we have%
\begin{eqnarray}
\left( -n\right) _{k} &=&\frac{\left( -1\right) ^{k}\,n!}{\left( n-k\right) !%
},  \label{(-n)_k} \\
\frac{1}{\nu +k+1} &=&\frac{\left( \nu +1\right) _{k}}{\left( \nu +1\right)
\,\left( \nu +2\right) _{k}},  \label{1/(k+a)=Pochhammer}
\end{eqnarray}%
thus, taking into account the definition of the generalized hypergeometric
function (\ref{pFq_def}) and the Chu-Vandermonde summation formula (\ref%
{Chu_formula}), we have%
\begin{eqnarray}
I &=&\frac{t^{n+\nu +1}}{\nu +1}\sum_{k=0}^{n}\frac{\left( -n\right)
_{k}\,\left( \nu +1\right) _{k}}{k!\,\left( \nu +2\right) _{k}}=\frac{%
t^{n+\nu +1}}{\nu +1}\,_{2}F_{1}\left(
\begin{array}{c}
-n,\nu +2 \\
\nu +1%
\end{array}%
;1\right)  \label{J_resultado} \\
&=&\frac{t^{n+\nu +1}}{\nu +1}\frac{\left( 1\right) _{n}}{\left( \nu
+2\right) _{n}}.  \notag
\end{eqnarray}%
Substitute back (\ref{J_resultado})\ into (\ref{I_definition})\ and take
into account again the definition of the generalized hypergeometric function
(\ref{pFq_def}) to complete the proof.
\end{proof}

\begin{example}
The integral (\ref{Convolution_(1-exp)}) arises in the context of
viscoelastic materials following the Becker model and subjected to a power
law stress.

\begin{enumerate}
\item Physical setup \newline
Consider a linear viscoelastic material subjected to a tensile or
compressive stress $\sigma \left( t\right) $. The strain produced in the
material is given by \cite[Eqn. 2.6a]{MainardiBook}:
\begin{equation}
\varepsilon \left( t\right) =\sigma \left( t\right) \,J\left( 0^{+}\right)
+\int_{0}^{t}\dot{J}\left( t-\tau \right) \,\sigma \left( \tau \right)
\,d\tau ,  \label{strain_convolution}
\end{equation}%
where $J\left( t\right) $ is the creep compliance function (i.e. the strain
response to a unit step of stress).

\item Constitutive model \newline
Consider a material following the creep compliance of the Becker model \cite[%
Eqn. 2.37]{MainardiBook}:%
\begin{equation}
J\left( t\right) =a\,\mathrm{Ein}\left( \frac{t}{t_{0}}\right) ,\quad a>0,\
t_{0}>0,  \label{Creep_Becker}
\end{equation}%
thus, according to (\ref{Ein_series}),
\begin{equation}
J\left( 0^{+}\right) =0,  \label{J(0+)_Becker}
\end{equation}%
and, according to (\ref{Ein_def}), the creep rate is%
\begin{equation}
\dot{J}\left( t\right) =a\,\frac{1-\exp \left( -t/t_{0}\right) }{t}.
\label{Creep_rate_Becker}
\end{equation}%
Also, consider a viscoelastic material subjected to a stress $\sigma \left(
t\right) $ that varies with time following a power law,%
\begin{equation}
\sigma \left( t\right) =\sigma _{0}\,\left( \frac{t}{t_{\sigma }}\right)
^{\nu },\quad \sigma _{0}>0,\ t_{\sigma }>0.  \label{sigma_power_law}
\end{equation}

\item Substitute (\ref{J(0+)_Becker})-(\ref{sigma_power_law}) into (\ref%
{strain_convolution}), taking into account the integral calculated in (\ref%
{Convolution_(1-exp)}), to obtain the strain in the material described
above:
\begin{eqnarray*}
\varepsilon \left( t\right)  &=&\int_{0}^{t}\dot{J}\left( \tau \right)
\,\sigma \left( t-\tau \right) \,d\tau  \\
&=&a\frac{\,\sigma _{0}}{t_{\sigma }^{\nu }}\int_{0}^{t}\frac{1-\exp \left(
-\,\tau /t_{0}\right) }{\tau }\,\left( t-\tau \right) ^{\nu }d\tau  \\
&=&a\,\sigma _{0}\frac{t}{t_{0}}\left( \frac{t}{t_{\sigma }}\right) ^{\nu
}\,_{2}F_{2}\left(
\begin{array}{c}
1,1 \\
2,\nu +2%
\end{array}%
;-\,\frac{t}{t_{0}}\right) .
\end{eqnarray*}
\end{enumerate}
\end{example}

\subsection{Parabolic cylinder function}

\begin{theorem}
\label{Theorem: Laplace 3} For $\Re \left( a^{2}\right) >0$ and $\Re \left(
b^{2}\right) >0$, the following convolution integral holds:
\begin{eqnarray}
&&\int_{0}^{t}\exp \left( -\frac{1}{4}\left[ \frac{a^{2}}{\tau }+\frac{b^{2}%
}{t-\tau }\right] \right) \frac{\,D_{2\nu +1}\left( \frac{a}{\sqrt{\tau }}%
\right) \,D_{2\mu +1}\left( \frac{b}{\sqrt{t-\tau }}\right) }{\tau ^{\nu
+1}\left( t-\tau \right) ^{\mu +1}}\,\,d\tau   \label{Int_parabolic_general}
\\
&=&\frac{\sqrt{2\pi }}{t^{\mu +\nu +1}}\exp \left( -\frac{\left( a+b\right)
^{2}}{4t}\right) \,D_{2\left( \mu +\nu \right) +1}\left( \frac{a+b}{\sqrt{t}}%
\right) .  \notag
\end{eqnarray}
\end{theorem}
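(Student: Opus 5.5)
The plan is to follow the pattern of the proof of Theorem \ref{Theorem: Laplace 1}: find a single inverse Laplace transform that produces each factor of the integrand, and check that the transforms multiply into a function of the same form with the parameters added. Define, for fixed $a$ and $\nu$,
\begin{equation*}
f_{\nu,a}\left( t\right) =t^{-\nu -1}\exp \left( -\frac{a^{2}}{4t}\right) D_{2\nu +1}\left( \frac{a}{\sqrt{t}}\right) .
\end{equation*}
I expect its Laplace transform to have the form $K_{\nu }\,s^{\nu }\exp \left( -a\sqrt{2s}\right) $, with a constant $K_{\nu }$ depending only on $\nu$. This is the type of formula tabulated in \cite[Sect. 2.2.2]{Prudnikov5}, namely $\mathcal{L}^{-1}\left[ s^{\nu /2}e^{-a\sqrt{s}};t\right] $ expressed through $t^{-\nu /2-1}e^{-a^{2}/(8t)}D_{\nu +1}\left( a/\sqrt{2t}\right) $, after rescaling $a$ and $\nu $. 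If this holds, then (\ref{Convolution_Laplace}) gives
\begin{equation*}
\int_{0}^{t}f_{\nu ,a}\left( \tau \right) f_{\mu ,b}\left( t-\tau \right) d\tau =K_{\nu }K_{\mu }\,\mathcal{L}^{-1}\left[ s^{\mu +\nu }e^{-\left( a+b\right) \sqrt{2s}};t\right] =\frac{K_{\nu }K_{\mu }}{K_{\mu +\nu }}f_{\mu +\nu ,a+b}\left( t\right) .
\end{equation*}
This is exactly the structure of (\ref{Int_parabolic_general}).

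The key steps, in order, are as follows.

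First, fix the constant by the case $\nu =0$. By (\ref{D_1_reduction}) we have $f_{0,a}(t)=a\,t^{-3/2}e^{-a^{2}/(2t)}$. The classical transform $\mathcal{L}\left[ \frac{c}{2\sqrt{\pi }}t^{-3/2}e^{-c^{2}/(4t)}\right] =e^{-c\sqrt{s}}$ (obtained from the two transforms quoted in the proof of Theorem \ref{Theorem: Laplace 1}, e.g.\ by differentiating in $c$), applied with $c=\sqrt{2}a$, gives $K_{0}=\sqrt{2\pi }$. This is consistent with the prefactor $\sqrt{2\pi }$ in the statement.

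Second, establish the general transform with $K_{\nu }=\sqrt{2\pi }\,\lambda ^{\nu }$ for some fixed $\lambda $; I expect $\lambda $ to be a power of $2$. With this form, $K_{\nu }K_{\mu }/K_{\mu +\nu }=\sqrt{2\pi }$ independently of $\lambda $, so the exact value of $\lambda $ is irrelevant.

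Third, write out the convolution. Substituting back $f_{\mu +\nu ,a+b}$ reproduces the right-hand side of (\ref{Int_parabolic_general}) exactly.

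The main obstacle is step two, namely proving (rather than only citing) the transform of $f_{\nu ,a}$ with the correct $\nu $-dependence of the constant. I would first take it from \cite{Prudnikov5} and check it against the $\nu =0$ case above. For an independent verification, I would work in the range $\Re \left( 2\nu +1\right) <0$ and use the integral representation
\begin{equation*}
D_{p}\left( z\right) =\frac{e^{-z^{2}/4}}{\Gamma \left( -p\right) }\int_{0}^{\infty }x^{-p-1}e^{-zx-x^{2}/2}dx .
\end{equation*}
The plan there is to swap the $t$- and $x$-integrations, evaluate the inner $t$-integral as a Macdonald function via the integral \cite[Eqn. 2.3.16(1)]{Prudnikov1} used earlier, and then recognise the result as a power of $s$ times $e^{-a\sqrt{2s}}$. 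The result would then be extended to other $\nu $ by analytic continuation in $\nu $.

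Convergence also needs a check. For $\Re (a^{2})>0$ the factor $e^{-a^{2}/(4\tau )}$, combined with the Gaussian decay of $D_{2\nu +1}$ at infinity, kills the singularity at $\tau =0$. This is why no restriction on $\nu $ or $\mu $ beyond those on $a^{2}$ and $b^{2}$ should be needed, and the same argument controls the endpoint $\tau =t$. Once exponential order is confirmed, the hypotheses of (\ref{Convolution_Laplace}) are satisfied.
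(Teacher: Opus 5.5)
Your proposal is correct and is essentially the paper's proof. The paper quotes the tabulated transform $\mathcal{L}^{-1}\left[ s^{\beta}\exp(-\alpha\sqrt{s});t\right] =\frac{t^{-\beta-1}}{2^{\beta+1/2}\sqrt{\pi}}\exp\left(-\frac{\alpha^{2}}{8t}\right) D_{2\beta+1}\left(\frac{\alpha}{\sqrt{2t}}\right)$, applies the convolution theorem, and then rescales $a\mapsto\sqrt{2}a$, $b\mapsto\sqrt{2}b$; in your notation this gives $K_{\nu}=2^{\nu+1/2}\sqrt{\pi}$ (so $\lambda=2$, matching your $\nu=0$ check), hence $K_{\nu}K_{\mu}/K_{\mu+\nu}=\sqrt{2\pi}$ as you anticipated.
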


\begin{proof}
Apply the Laplace convolution theorem (\ref{Convolution_Laplace}), taking
into account the following inverse Laplace transform (see \cite[Eqn.
2.2.2(10)]{Prudnikov5}):
\begin{equation*}
\mathcal{L}^{-1}\left[ s^{\beta }\exp \left( -\alpha \sqrt{s}\right) ;t%
\right] =\frac{t^{-\beta -1}}{2^{\beta +1/2}\sqrt{\pi }}\exp \left( -\frac{%
\alpha ^{2}}{8t}\right) \,D_{2\beta +1}\left( \frac{\alpha }{\sqrt{2t}}%
\right) ,\quad \Re \left( \alpha ^{2}\right) >0,
\end{equation*}
to obtain%
\begin{eqnarray*}
&&\frac{1}{2^{\mu +\nu +1}\pi }\int_{0}^{t}\exp \left( -\frac{1}{8}\left[
\frac{a^{2}}{\tau }+\frac{b^{2}}{t-\tau }\right] \right) \,\frac{D_{2\nu
+1}\left( \frac{a}{\sqrt{2\tau }}\right) \,D_{2\mu +1}\left( \frac{b}{\sqrt{%
2\left( t-\tau \right) }}\right) }{\tau ^{\nu +1}\left( t-\tau \right) ^{\mu
+1}}\,\,d\tau \\
&=&\mathcal{L}^{-1}\left[ s^{\mu +\nu }\exp \left( -\left( a+b\right) \sqrt{s%
}\right) ;t\right] \\
&=&\frac{1}{2^{\mu +\nu +1/2}\sqrt{\pi }\,t^{\mu +\nu +1}}\exp \left( -\frac{%
\left( a+b\right) ^{2}}{8t}\right) \,D_{2\left( \mu +\nu \right) +1}\left(
\frac{a}{\sqrt{2t}}\right) .
\end{eqnarray*}%
Perform the substitutions $a\mapsto \sqrt{2}a$, $b\mapsto \sqrt{2}b$, and
simplify the result to complete the proof.
\end{proof}

\begin{corollary}
\label{Corollary a Laplace 3} For $\Re \left( a^{2}\right) >0$ and $\Re
\left( b^{2}\right) >0$, the following convolution integral holds:
\begin{eqnarray}
&&\int_{0}^{t}\exp \left( -\frac{a^{2}}{4\tau }\right) \,D_{2\nu +1}\left(
\frac{a}{\sqrt{\tau }}\right) \,\mathrm{erfc}\left( \frac{b}{\sqrt{t-\tau }}%
\right) \,\frac{\,d\tau }{\tau ^{\nu +1}}  \label{Int_parabolic_erfc} \\
&=&\frac{2}{t^{\nu }}\exp \left( -\frac{\left( a+\sqrt{2}b\right) ^{2}}{4t}%
\right) \,D_{2\nu -1}\left( \frac{a+\sqrt{2}b}{\sqrt{t}}\right) .  \notag
\end{eqnarray}
\end{corollary}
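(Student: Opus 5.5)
The plan is to obtain (\ref{Int_parabolic_erfc}) as the special case $\mu=-1$ of Theorem \ref{Theorem: Laplace 3}, followed by a rescaling of $b$. The key input is the reduction formula (\ref{D_-1_resultado}), which says
\[
\exp\left( -\frac{z^{2}}{4}\right) D_{-1}\left( z\right) =\sqrt{\frac{\pi }{2}}\,\mathrm{erfc}\left( \frac{z}{\sqrt{2}}\right) .
\]
So when $2\mu+1=-1$, the $b$-dependent factors in (\ref{Int_parabolic_general}) collapse to a complementary error function. Since $(t-\tau)^{\mu+1}=1$ at $\mu=-1$, the power of $(t-\tau)$ in the denominator disappears as well.

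First I will check that $\mu=-1$ is admissible, because Theorem \ref{Theorem: Laplace 3} places no restriction on $\mu$. The inverse Laplace transform used in its proof is, at $\beta=-1$,
\[
\mathcal{L}^{-1}\left[ s^{-1}\exp\left( -\alpha\sqrt{s}\right) ;t\right] .
\]
By (\ref{D_-1_resultado}), the right-hand side of that formula becomes $\frac{\sqrt{2}}{\sqrt{\pi}}\sqrt{\frac{\pi}{2}}\,\mathrm{erfc}\left(\frac{\alpha}{2\sqrt{t}}\right)=\mathrm{erfc}\left(\frac{\alpha}{2\sqrt{t}}\right)$. This is exactly the known transform already used in the proof of Theorem \ref{Theorem: Laplace 1}, so the convolution argument goes through unchanged at $\mu=-1$. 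The relevant functions are piecewise continuous and of exponential order, as the convolution theorem requires.

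Next I substitute $\mu=-1$ into (\ref{Int_parabolic_general}). The left-hand side becomes
\[
\sqrt{\frac{\pi}{2}}\int_0^t \exp\left(-\frac{a^2}{4\tau}\right) D_{2\nu+1}\left(\frac{a}{\sqrt{\tau}}\right)\mathrm{erfc}\left(\frac{b}{\sqrt{2(t-\tau)}}\right)\frac{d\tau}{\tau^{\nu+1}}.
\]
The right-hand side becomes
\[
\sqrt{2\pi}\,t^{-\nu}\exp\left(-\frac{(a+b)^2}{4t}\right)D_{2\nu-1}\left(\frac{a+b}{\sqrt{t}}\right).
\]
Dividing both sides by $\sqrt{\pi/2}$ produces the factor $2$ that appears in the statement. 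Finally I substitute $b\mapsto\sqrt{2}\,b$, which turns $\mathrm{erfc}\left(\frac{b}{\sqrt{2(t-\tau)}}\right)$ into $\mathrm{erfc}\left(\frac{b}{\sqrt{t-\tau}}\right)$ and $a+b$ into $a+\sqrt{2}\,b$. This rescaling preserves the hypothesis $\Re(b^2)>0$.

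The only step I expect to need any care is the admissibility of $\mu=-1$, that is, confirming that the tabulated transform from \cite{Prudnikov5} remains valid there. The erfc check above settles this directly, so the rest is algebraic simplification.
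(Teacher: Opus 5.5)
Your proposal is correct and is essentially the paper's own proof: you set $\mu=-1$ in Theorem~\ref{Theorem: Laplace 3}, use (\ref{D_-1_resultado}) to turn $\exp(-z^{2}/4)\,D_{-1}(z)$ into $\sqrt{\pi/2}\,\mathrm{erfc}(z/\sqrt{2})$, divide by $\sqrt{\pi/2}$, and rescale $b\mapsto\sqrt{2}\,b$ (a step the paper leaves implicit); your check that the $\beta=-1$ transform reduces to the known $\mathrm{erfc}$ transform is a sensible extra. One caveat you share with the paper is that the tabulated inverse transform of $s^{\beta}e^{-\alpha\sqrt{s}}$ needs $|\arg\alpha|<\pi/4$, not merely $\Re(\alpha^{2})>0$, so both the argument and the identity require $\Re a>0$ and $\Re b>0$ as well: at $\nu=-\tfrac{1}{2}$, for instance, the left-hand side is even in $a$, while the right-hand side $2\sqrt{\pi t}\,\mathrm{ierfc}\bigl((a+\sqrt{2}b)/\sqrt{2t}\bigr)$ is not.
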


\begin{proof}
Take $\mu =-1$ in (\ref{Int_parabolic_general}) to obtain
\begin{eqnarray}
&&\int_{0}^{t}\exp \left( -\frac{1}{4}\left[ \frac{a^{2}}{\tau }+\frac{b^{2}%
}{t-\tau }\right] \right) \,D_{2\nu +1}\left( \frac{a}{\sqrt{\tau }}\right)
\,D_{-1}\left( \frac{b}{\sqrt{t-\tau }}\right) \,\frac{\,d\tau }{\tau ^{\nu
+1}}  \label{Int_parabolic_D_-1} \\
&=&\frac{\sqrt{2\pi }}{t^{\nu }}\exp \left( -\frac{\left( a+b\right) ^{2}}{4t%
}\right) \,D_{2\nu -1}\left( \frac{a+b}{\sqrt{t}}\right) .  \notag
\end{eqnarray}%
Finally, insert (\ref{D_-1_resultado}) into (\ref{Int_parabolic_D_-1}) to
complete the proof.
\end{proof}

\begin{corollary}
\label{Corollary b Laplace 3} For $\Re \left( a^{2}\right)>0$ and $\Re
\left( b^{2}\right) >0$, the following convolution integral holds true:%
\begin{equation}
\int_{0}^{t}\exp \left( -\frac{a^{2}}{t-\tau }\right) \,\,\mathrm{erfc}%
\left( \frac{b}{\sqrt{\tau }}\right) \,\frac{\,d\tau }{\sqrt{t-\tau }}=2%
\sqrt{\pi t}\,\,\mathrm{ierfc}\left( \frac{a+b}{\sqrt{t}}\right) .
\label{Convolution_ierfc}
\end{equation}
\end{corollary}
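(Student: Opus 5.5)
We obtain (\ref{Convolution_ierfc}) as the special case $\nu=-\tfrac{1}{2}$ of Corollary \ref{Corollary a Laplace 3}. The point of this choice is that $2\nu+1=0$, so the parabolic cylinder factor in the integrand of (\ref{Int_parabolic_erfc}) becomes elementary by (\ref{D_0_reduction}), while $2\nu-1=-2$, so the right-hand side reduces through (\ref{D_-n-1_reduction}) with $n=1$ to an $\mathrm{ierfc}$.

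First, the left-hand side. Using $D_{0}(z)=\exp(-z^{2}/4)$, we have $\exp\left(-\frac{a^{2}}{4\tau}\right)D_{0}\left(\frac{a}{\sqrt{\tau}}\right)=\exp\left(-\frac{a^{2}}{2\tau}\right)$. The power $\tau^{-\nu-1}$ becomes $\tau^{-1/2}$. Hence the left-hand side of (\ref{Int_parabolic_erfc}) is
\begin{equation*}
\int_{0}^{t}\exp\left(-\frac{a^{2}}{2\tau}\right)\mathrm{erfc}\left(\frac{b}{\sqrt{t-\tau}}\right)\frac{d\tau}{\sqrt{\tau}}.
\end{equation*}

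Second, the right-hand side. It equals $2t^{1/2}\exp(-z^{2}/4)\,D_{-2}(z)$ with $z=(a+\sqrt{2}b)/\sqrt{t}$. Formula (\ref{D_-n-1_reduction}) with $n=1$ gives $D_{-2}(z)=\sqrt{\pi}\,\exp(z^{2}/4)\,\mathrm{ierfc}(z/\sqrt{2})$. The exponentials therefore cancel, and the right-hand side becomes
\begin{equation*}
2\sqrt{\pi t}\,\mathrm{ierfc}\left(\frac{a+\sqrt{2}b}{\sqrt{2t}}\right).
\end{equation*}

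Third, we rescale and change variables. Substitute $a\mapsto\sqrt{2}a$, which keeps $\Re(a^{2})>0$. Then $\exp(-a^{2}/(2\tau))$ becomes $\exp(-a^{2}/\tau)$, and the argument of $\mathrm{ierfc}$ becomes $(a+b)/\sqrt{t}$. Finally, perform the change of variables $\tau\mapsto t-\tau$ in the integral. This moves the Gaussian factor and the $1/\sqrt{\cdot}$ onto $t-\tau$ and the $\mathrm{erfc}$ onto $\tau$, which is exactly the form of (\ref{Convolution_ierfc}).

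The only delicate step is the constant bookkeeping in the reduction of $D_{-2}$, namely the factor $2^{(n-1)/2}=1$ for $n=1$, together with checking that the $\sqrt{2}$'s cancel after the rescaling. As an independent check, we can verify the result directly with the convolution theorem (\ref{Convolution_Laplace}). Take $\mathcal{L}^{-1}[e^{-2a\sqrt{s}}/\sqrt{s};t]=e^{-a^{2}/t}/\sqrt{\pi t}$ and $\mathcal{L}^{-1}[e^{-2b\sqrt{s}}/s;t]=\mathrm{erfc}(b/\sqrt{t})$, both from the proof of Theorem \ref{Theorem: Laplace 1}. Combine these with the known inverse transform $\mathcal{L}^{-1}[e^{-\alpha\sqrt{s}}/s^{3/2};t]=2\sqrt{t}\,\mathrm{ierfc}(\alpha/(2\sqrt{t}))$, with $\alpha=2(a+b)$. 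This reproduces (\ref{Convolution_ierfc}) after multiplying by $\sqrt{\pi}$.
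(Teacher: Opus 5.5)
Your proof is correct and is essentially the paper's own argument: set $\nu=-\tfrac{1}{2}$ in Corollary \ref{Corollary a Laplace 3}, reduce $D_{0}$ and $D_{-2}$ via (\ref{D_0_reduction}) and (\ref{D_-n-1_reduction}) with $n=1$, rescale $a\mapsto\sqrt{2}a$, and reflect $\tau\mapsto t-\tau$; your constant bookkeeping, including the cancellation of the factors $\exp(\pm z^{2}/4)$, is right. Your independent check via the convolution theorem with $\mathcal{L}^{-1}[e^{-\alpha\sqrt{s}}/s^{3/2};t]=2\sqrt{t}\,\mathrm{ierfc}(\alpha/(2\sqrt{t}))$ is also valid, and it is the direct one-step route that the chain Theorem \ref{Theorem: Laplace 3} $\to$ Corollary \ref{Corollary a Laplace 3} $\to$ this corollary encodes (exponents $-\tfrac12$ and $-1$ on $s$).
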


\begin{proof}
Perform the substitutions $\tau \mapsto t-\tau $ and $\nu =-\frac{1}{2}$ in (%
\ref{Int_parabolic_erfc}), to obtain
\begin{eqnarray*}
&&\int_{0}^{t}\exp \left( -\frac{a^{2}}{4\left( t-\tau \right) }\right)
\,D_{0}\left( \frac{a}{\sqrt{t-\tau }}\right) \,\mathrm{erfc}\left( \frac{b}{%
\sqrt{\tau }}\right) \,\frac{\,d\tau }{\sqrt{t-\tau }} \\
&=&2\sqrt{t}\exp \left( -\frac{\left( a+\sqrt{2}b\right) ^{2}}{4t}\right)
\,D_{-2}\left( \frac{a+\sqrt{2}b}{\sqrt{t}}\right) .
\end{eqnarray*}%
Finally, apply the reduction formulas (\ref{D_0_reduction}),\ and (\ref%
{D_-n-1_reduction})\ for $n=1$ and perform the substitution $a\mapsto \sqrt{2%
}a$ to complete the proof.
\end{proof}

\begin{example}
The integral given in (\ref{Convolution_ierfc}) appears in the heat
conduction in a semi-infinite medium, initially at zero temperature, with a
prescribed heat flux on its surface.

\begin{enumerate}
\item Physical setup \newline
Consider a semi-infinite medium of thermal diffusivity $\kappa $, and
thermal conductivity $K$, located at $x\geq 0$. Also, consider an outer
layer of thermal diffusivity $\kappa _{0}$ and thermal conductivity $K_{0}$,
located at $-L\leq x\leq 0$. Both media are initially at zero temperature,
and the outer layer is exposed to a heat flux $q$ for $t>0$. According to
\cite[Chap. 6]{Ozisik}, if the thermal effusivities of both mediums are
equal, i.e.,
\begin{equation*}
E_{0}=\frac{K_{0}}{\sqrt{\kappa _{0}}}=\frac{K}{\sqrt{\kappa }}=E,
\end{equation*}%
the heat flux at $x=0$ is given by
\begin{equation}
q\left( t\right) =q\,\mathrm{erfc}\left( \frac{b}{\sqrt{t}}\right) ,\quad b=%
\frac{L}{2\sqrt{\kappa _{0}}}.  \label{q(t)_erfc}
\end{equation}

\item Governing equations (semi-infinite medium)

\begin{itemize}
\item Heat equation
\begin{equation}
\frac{\partial T\left( x,t\right) }{\partial t}=\kappa \frac{\partial
^{2}T\left( x,t\right) }{\partial x^{2}},\quad x>0,\,t>0.
\label{Heat_Equation_q}
\end{equation}

\item Boundary conditions
\begin{equation}
-K\left. \frac{\partial T\left( x,t\right) }{\partial x}\right\vert
_{x=0}=q\left( t\right) ,  \label{BC_q}
\end{equation}%
\begin{equation}
\lim_{x\rightarrow \infty }T\left( x,t\right) =0.  \label{BC_infinity_q}
\end{equation}

\item Initial condition
\begin{equation}
T(x,0)=0.  \label{IC_q}
\end{equation}
\end{itemize}

\item According to \cite[Sect. 2.9]{Jaeger}, the solution of (\ref%
{Heat_Equation_q})-(\ref{IC_q}) is given by the convolution integral:
\begin{equation*}
T\left( x,t\right) =\frac{\sqrt{\kappa }}{\sqrt{\pi }K}\int_{0}^{t}\frac{%
q\left( \tau \right) }{\sqrt{t-\tau }}\exp \left( -\frac{a^{2}}{t-\tau }%
\right) \,d\tau ,\quad a=\frac{x}{2\sqrt{\kappa }}.
\end{equation*}%
Therefore, inserting the heat flux $q\left( t\right) $ given in (\ref%
{q(t)_erfc}),\ and applying the convolution integral (\ref{Convolution_ierfc}%
), we obtain:%
\begin{equation*}
T\left( x,t\right) =\frac{2q}{E}\sqrt{t}\,\mathrm{ierfc}\left( \frac{x}{2%
\sqrt{\kappa t}}+\frac{L}{2\sqrt{\kappa _{0}t}}\right) .
\end{equation*}
\end{enumerate}
\end{example}

\begin{theorem}
For $\Re \left( a^{2}\right) >0$, the following convolution integral holds:
\begin{eqnarray}
&&\int_{0}^{t}\exp \left( -\frac{a^{2}}{4\tau }\right) \,D_{2\nu +1}\left(
\frac{a}{\sqrt{\tau }}\right) \,\frac{\left( t-\tau \right) ^{\mu -1}}{\tau
^{\nu +1}}\,d\tau  \label{Int_repeated_integration} \\
&=&\frac{2^{\mu }\,\Gamma \left( \mu \right) }{t^{\nu -\mu +1}}\exp \left( -%
\frac{a^{2}}{4t}\right) \,D_{2\left( \nu -\mu \right) +1}\left( \frac{a}{%
\sqrt{t}}\right) .  \notag
\end{eqnarray}
\end{theorem}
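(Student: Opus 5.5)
We apply the Laplace convolution theorem (\ref{Convolution_Laplace}) as in the proof of Theorem \ref{Theorem: Laplace 3}, pairing the transform cited there,
\begin{equation*}
\mathcal{L}^{-1}\left[ s^{\beta }\exp \left( -\alpha \sqrt{s}\right) ;t\right] =\frac{t^{-\beta -1}}{2^{\beta +1/2}\sqrt{\pi }}\exp \left( -\frac{\alpha ^{2}}{8t}\right) D_{2\beta +1}\left( \frac{\alpha }{\sqrt{2t}}\right) ,\quad \Re \left( \alpha ^{2}\right) >0,
\end{equation*}
with $\beta =\nu $, against the elementary transform $\mathcal{L}^{-1}\left[ s^{-\mu };t\right] =t^{\mu -1}/\Gamma \left( \mu \right) $ for $\Re \left( \mu \right) >0$. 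The product of the images is $s^{\nu -\mu }\exp \left( -\alpha \sqrt{s}\right) $. Its inverse is again given by the same formula, now with $\beta =\nu -\mu $.

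This yields
\begin{equation*}
\frac{1}{2^{\nu +1/2}\sqrt{\pi }\,\Gamma \left( \mu \right) }\int_{0}^{t}\exp \left( -\frac{\alpha ^{2}}{8\tau }\right) D_{2\nu +1}\left( \frac{\alpha }{\sqrt{2\tau }}\right) \frac{\left( t-\tau \right) ^{\mu -1}}{\tau ^{\nu +1}}\,d\tau =\frac{t^{\mu -\nu -1}}{2^{\nu -\mu +1/2}\sqrt{\pi }}\exp \left( -\frac{\alpha ^{2}}{8t}\right) D_{2\left( \nu -\mu \right) +1}\left( \frac{\alpha }{\sqrt{2t}}\right) .
\end{equation*}
Substituting $\alpha \mapsto \sqrt{2}\,a$ turns $\alpha ^{2}/8$ into $a^{2}/4$ and $\alpha /\sqrt{2\tau }$ into $a/\sqrt{\tau }$. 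Multiplying through by $2^{\nu +1/2}\sqrt{\pi }\,\Gamma \left( \mu \right) $ leaves the prefactor $2^{\mu }\Gamma \left( \mu \right) t^{\mu -\nu -1}$, which is exactly (\ref{Int_repeated_integration}).

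An alternative derivation, consistent with the label, starts from the case of the transform with $\beta =\nu $ and integrates it $n$ times in $t$ via (\ref{Repeated_integration}) with $a=0$. Each integration multiplies the image by $s^{-1}$, so one gets the case $\mu =n+1$ with factor $1/n!$, and $\Gamma \left( \mu \right) =n!$ matches. One would then pass to general $\mu $ by analytic continuation. The direct convolution route above avoids this continuation step, so we use it.

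The main obstacle is justifying the hypotheses of (\ref{Convolution_Laplace}). The factor $\left( t-\tau \right) ^{\mu -1}$ is singular at $\tau =t$ when $0<\Re \mu <1$, but it is integrable. The kernel is controlled at $\tau \rightarrow 0^{+}$ because the large-argument behaviour $D_{\lambda }\left( z\right) \sim z^{\lambda }e^{-z^{2}/4}$ together with the prefactor $\exp \left( -a^{2}/(4\tau )\right) $ gives decay like $\exp \left( -a^{2}/(2\tau )\right) $ times a power, for $\Re \left( a^{2}\right) >0$. This is why the statement needs only $\Re \left( a^{2}\right) >0$, with the implicit restriction $\Re \left( \mu \right) >0$ required for $\Gamma \left( \mu \right) $ and for the convergence of the integral.
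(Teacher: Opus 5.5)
Your argument is correct, but it is not the paper's route. The paper sets $b=0$ in (\ref{Int_parabolic_erfc}), which is a boundary case of a corollary stated for $\Re(b^2)>0$. This gives the $\mu=1$ instance $\int_0^t f_\nu(\tau)\,d\tau=2f_{\nu-1}(t)$, where $f_\nu(\tau)=\tau^{-\nu-1}e^{-a^2/(4\tau)}D_{2\nu+1}(a/\sqrt{\tau})$. The paper then iterates this through the Cauchy formula (\ref{Repeated_integration}), obtaining $\int_0^t (t-\tau)^{n-1}f_\nu(\tau)\,d\tau=(n-1)!\,2^{n}f_{\nu-n}(t)$. That is essentially the alternative you sketched and set aside.

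The paper's route reuses earlier results and needs no new transform pair. Strictly, though, it proves the identity only for $\mu=1,2,3,\ldots$; the non-integer case signalled by $\Gamma(\mu)$ is left without the continuation argument it would need. Your direct convolution of $s^{\nu}e^{-\alpha\sqrt{s}}$ with $s^{-\mu}$ is the mechanism of Theorem \ref{Theorem: Laplace 3}, with $s^{-\mu}$ replacing $s^{\mu}e^{-b\sqrt{s}}$. It covers every $\Re\mu>0$ at once, makes that restriction explicit, and avoids the limiting case $b=0$. Your constants are right: the factor $2^{\nu+1/2}\sqrt{\pi}\,\Gamma(\mu)$ and the substitution $\alpha=\sqrt{2}a$ both check out.

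One caveat on your last paragraph: the hypothesis should be read as $\Re a>0$, not merely $\Re(a^2)>0$. Two of your steps need it:
\begin{itemize}
\item the pair for $e^{-\alpha\sqrt{s}}$ requires the image to decay as $s\to\infty$;
\item the asymptotic $D_\lambda(z)\sim z^\lambda e^{-z^2/4}$ you invoke at $\tau\to0^+$ holds only for $|\arg z|<3\pi/4$.
\end{itemize}
For $\Re a<0$ with $\Re(a^2)>0$ the identity is actually false. Take $\nu=-\tfrac{1}{2}$, $\mu=1$: the left side $\int_0^t e^{-a^2/(2\tau)}\tau^{-1/2}\,d\tau$ is even in $a$. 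The right side $2\sqrt{\pi t}\,\mathrm{ierfc}(a/\sqrt{2t})$ is not, because $\mathrm{ierfc}(-x)-\mathrm{ierfc}(x)=2x$. This defect comes from the statement as written, not from your derivation, but your remark that ``only $\Re(a^2)>0$'' is needed should be corrected accordingly.
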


\begin{proof}
Take $b=0$ in (\ref{Int_parabolic_erfc}), and apply (\ref{erfc(0)}) to
obtain (see \cite[Eqn. 1.7.1(9)]{Prudnikov2}):
\begin{equation}
\int_{0}^{t}\exp \left( -\frac{a^{2}}{4\tau }\right) \,D_{2\nu +1}\left(
\frac{a}{\sqrt{\tau }}\right) \,\frac{\,d\tau }{\tau ^{\nu +1}}=\frac{2}{%
t^{\nu }}\exp \left( -\frac{a^{2}}{4t}\right) \,D_{2\nu -1}\left( \frac{a}{%
\sqrt{t}}\right) .  \label{Int_parabolic_particular}
\end{equation}%
Apply the repeated integration formula (\ref{Repeated_integration}) to (\ref%
{Int_parabolic_particular}) to complete the proof.
\end{proof}

\begin{corollary}
Take $\nu =0$ in (\ref{Int_repeated_integration}), and perform the
substitutions $a\mapsto \sqrt{2}a$, and $\mu \rightarrow \mu +1$, taking
into account the reduction formula (\ref{D_1_reduction}), to obtain for $\Re
\left( a^{2}\right) >0$,
\begin{eqnarray}
&&\int_{0}^{t}\exp \left( -\frac{a^{2}}{t-\tau }\right) \frac{\tau ^{\mu }}{%
\left( t-\tau \right) ^{3/2}}\,d\tau  \label{Convolution_power_law} \\
&=&\frac{2^{\mu +1/2}\,\Gamma \left( \mu +1\right) }{a}\,t^{\mu }\,\exp
\left( -\frac{a^{2}}{2t}\right) \,D_{-1-2\mu }\left( \frac{\sqrt{2}a}{\sqrt{t%
}}\right) .  \notag
\end{eqnarray}
\end{corollary}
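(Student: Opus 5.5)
The plan is to specialize (\ref{Int_repeated_integration}) at $\nu=0$ and then rescale, exactly as the statement indicates. The only nontrivial input is the reduction formula (\ref{D_1_reduction}), $D_{1}\left( z\right) =z\exp \left( -z^{2}/4\right)$. It turns the product $\exp \left( -\frac{a^{2}}{4\tau }\right) D_{1}\left( \frac{a}{\sqrt{\tau }}\right)$ in the integrand into the elementary function $\frac{a}{\sqrt{\tau }}\exp \left( -\frac{a^{2}}{2\tau }\right)$.

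\textbf{Step 1 ($\nu=0$).} With $\nu=0$, the integrand of (\ref{Int_repeated_integration}) becomes $a\,\tau ^{-3/2}\exp \left( -\frac{a^{2}}{2\tau }\right) \left( t-\tau \right) ^{\mu -1}$. The right-hand side becomes $2^{\mu }\Gamma \left( \mu \right) t^{\mu -1}\exp \left( -\frac{a^{2}}{4t}\right) D_{1-2\mu }\left( \frac{a}{\sqrt{t}}\right)$.

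\textbf{Step 2 ($a\mapsto \sqrt{2}a$).} The exponent inside the integral becomes $-a^{2}/\tau$, and the prefactor becomes $\sqrt{2}a$. On the right, the exponential becomes $\exp \left( -\frac{a^{2}}{2t}\right)$ and the argument becomes $\sqrt{2}a/\sqrt{t}$. The condition $\Re \left( a^{2}\right) >0$ is unchanged by this rescaling.

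\textbf{Step 3 ($\mu \mapsto \mu +1$).} The power $\left( t-\tau \right) ^{\mu -1}$ becomes $\left( t-\tau \right) ^{\mu }$, and on the right $2^{\mu +1}\Gamma \left( \mu +1\right) t^{\mu }$ multiplies $D_{-1-2\mu }$. Dividing by $\sqrt{2}a$ gives the constant $2^{\mu +1/2}\Gamma \left( \mu +1\right) /a$.

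\textbf{Step 4 (reflection).} Apply the change of variables $\tau \mapsto t-\tau$ on the left. This interchanges the roles of $\tau$ and $t-\tau$, and yields exactly the integrand $\exp \left( -\frac{a^{2}}{t-\tau }\right) \tau ^{\mu }\left( t-\tau \right) ^{-3/2}$ of (\ref{Convolution_power_law}).

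All of this is routine bookkeeping. The step to be careful about is the range of validity, which is the main obstacle. In (\ref{Int_repeated_integration}), the parameter $\mu$ enters through the repeated-integration formula and through $\Gamma \left( \mu \right)$, so it implicitly requires $\Re \left( \mu \right) >0$, which becomes $\Re \left( \mu \right) >-1$ after the shift; the endpoint $\tau =t$ of the original integral needs exactly this for convergence. I would therefore note that (\ref{Convolution_power_law}) holds for $\Re \left( \mu \right) >-1$, the condition under which $\Gamma \left( \mu +1\right)$ is finite. For non-integer $\mu$ I would justify this either by analytic continuation in $\mu$ from positive integers or directly by the convolution theorem, using $\mathcal{L}^{-1}\left[ s^{-\mu -1};t\right] =t^{\mu }/\Gamma \left( \mu +1\right)$. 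The other endpoint, $\tau \rightarrow 0$ in the original variable, is harmless, since $\exp \left( -a^{2}/\tau \right)$ decays there when $\Re \left( a^{2}\right) >0$.
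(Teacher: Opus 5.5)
Your bookkeeping is correct, and it is exactly the derivation the corollary's statement prescribes, so in substance you reproduce the paper's primary argument. The proof the paper actually writes out is a different, self-contained one that bypasses (\ref{Int_repeated_integration}). After $\tau\mapsto t-\tau$, $\tau=tu$ and $x=(1-u)/u$, the integral becomes
\[
e^{-a^{2}/t}\,t^{\mu-1/2}\int_{0}^{\infty}x^{\mu}\left(1+x\right)^{-\mu-1/2}e^{-a^{2}x/t}\,dx=\Gamma\left(\mu+1\right)e^{-a^{2}/t}\,t^{\mu-1/2}\,\mathrm{U}\left(\mu+1,\tfrac{3}{2},\tfrac{a^{2}}{t}\right),
\]
and (\ref{Tricomi_reduction}) then turns the Tricomi function into $D_{-1-2\mu}$.

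What that route buys is the point you single out as delicate. The Tricomi integral converges for every $\Re(\mu)>-1$, so non-integer $\mu$ is handled at once. By contrast, the repeated-integration formula (\ref{Repeated_integration}) only delivers (\ref{Int_repeated_integration}) for positive integer $\mu$. Your route buys brevity by recycling earlier results. Of the two repairs you offer, the direct convolution argument with $\mathcal{L}^{-1}[s^{-\mu-1};t]=t^{\mu}/\Gamma(\mu+1)$ is sound. ``Analytic continuation in $\mu$ from the positive integers'' is not valid on its own, because the integers have no finite accumulation point; it would need Carlson's theorem plus growth estimates in $\mu$.

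One caveat that both your argument and the paper's miss concerns the branch of $a$. The left side of (\ref{Convolution_power_law}) depends only on $a^{2}$, but the right side is not even in $a$: at $\mu=0$ it equals $\sqrt{\pi}\,\mathrm{erfc}(a/\sqrt{t})/a$. So the identity actually needs $\Re(a)>0$ in addition to $\Re(a^{2})>0$, i.e.\ $|\arg a|<\pi/4$. For $a=-1$, $\mu=0$ the left side is positive and the right side is negative. In your route the error is inherited from (\ref{Int_repeated_integration}), whose left side at $\nu=0$ is odd in $a$ while its right side in general is not. In the paper's route it is hidden in writing $\sqrt{a^{2}/t}=a/\sqrt{t}$ when applying (\ref{Tricomi_reduction}).
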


\begin{proof}
As an alternative proof of (\ref{Convolution_power_law}), proceed as
follows. Perform the changes of variables $\tau \mapsto t-\tau $ and $\tau
=t\,u$, to obtain
\begin{equation*}
I=\int_{0}^{t}\exp \left( -\frac{a^{2}}{t-\tau }\right) \frac{\tau ^{\mu }}{%
\left( t-\tau \right) ^{3/2}}\,d\tau =t^{\mu -1/2}\int_{0}^{1}\exp \left( -%
\frac{a^{2}}{ut}\right) \left( 1-u\right) ^{\mu }\,u^{3/2}\,du.
\end{equation*}%
Further, perform the change of variables $x=\frac{1-u}{u}$, and apply the
integral representation of the Tricomi function (\ref{Tricomi_integral}), to
obtain%
\begin{eqnarray*}
I &=&e^{-b}t^{\mu -1/2}\int_{0}^{\infty }x^{\mu }\left( 1+x\right) ^{-\mu
-1/2}\exp \left( -\frac{a^{2}}{t}\,x\right) \,dx \\
&=&\exp \left( -\frac{a^{2}}{t}\right) \,t^{\mu -1/2}\,\mathrm{U}\left( \mu
+1,\frac{3}{2},\frac{a^{2}}{t}\right) .
\end{eqnarray*}%
Finally, apply the reduction formula (\ref{Tricomi_reduction}) to complete
the proof.
\end{proof}

\begin{example}
The integral (\ref{Convolution_power_law}) appears in the heat conduction in
a semi-infinite medium initially at zero temperature, with a power law heat
flux on its surface.

\begin{enumerate}
\item Physical setup \newline
Consider a semi-infinite solid ($x\geq 0$) with thermal diffusivity\ $\kappa
$, and thermal conductivity $K$, that is initially at zero temperature.
Also, at $x=0$ heat is supplied through a contact region, in such a way that
the region through which heat is delivered expands with time. Therefore, if $%
q_{0}$ is the local heat flux density ($\mathrm{W}\,\mathrm{m}^{-2}$) and $%
A\left( t\right) $ is the effective contact area, then the heat flux $%
q\left( t\right) $ at the boundary $x=0$ evolves as%
\begin{equation*}
q\left( t\right) =q_{0}\,A\left( t\right) .
\end{equation*}%
In many physical spreading processes, the contact radius $R\left( t\right) $
follows a similarity law, i.e. $R\left( t\right) \propto t^{\beta }$. For
instance, according to Tanner's law \cite{Tanner}, $R\left( t\right) \sim
t^{1/10}$. Therefore, if $A\left( t\right) $ is a disk-shaped region, we
have
\begin{equation*}
A\left( t\right) =\pi \,R\left( t\right) ^{2}\propto \,t^{2\beta },
\end{equation*}%
thus the heat flux at $x=0$ follows a power law function,
\begin{equation*}
q\left( t\right) =C\,t^{\mu },
\end{equation*}%
where $C$ and $\mu $ are given constants.

\item Governing Equations

\begin{itemize}
\item Heat equation
\begin{equation}
\frac{\partial T\left( x,t\right) }{\partial t}=\kappa \frac{\partial
^{2}T\left( x,t\right) }{\partial x^{2}},\quad x>0,\,t>0.
\label{Heat_Equation_power_law}
\end{equation}

\item Boundary conditions
\begin{equation}
-K\left. \frac{\partial T\left( x,t\right) }{\partial x}\right\vert
_{x=0}=q\left( t\right) ,  \label{BC_power_law}
\end{equation}%
\begin{equation}
\lim_{x\rightarrow \infty }T\left( x,t\right) =0.
\label{BC_at_infinity_power_law}
\end{equation}

\item Initial condition
\begin{equation}
T(x,0)=0.  \label{IC_power_law}
\end{equation}
\end{itemize}

\item According to \cite[Sect. 2.9]{Jaeger}, the solution of (\ref%
{Heat_Equation_power_law})-(\ref{IC_power_law}) is given by the convolution
integral:
\begin{equation*}
T\left( x,t\right) =\frac{\sqrt{\kappa }}{K\sqrt{\pi }}\int_{0}^{t}q\left(
t-\tau \right) \exp \left( -\frac{x^{2}}{4\kappa \tau }\right) \,\frac{d\tau
}{\sqrt{\tau }},
\end{equation*}%
thus the heat flux at an arbitrary point $x\geq 0$ is%
\begin{eqnarray*}
q\left( x,t\right) &=&-K\,\frac{\partial T\left( x,t\right) }{\partial x} \\
&=&\frac{x}{2\sqrt{\pi \,\kappa }}\int_{0}^{t}q\left( \tau \right) \exp
\left( -\frac{x^{2}}{4\kappa \left( t-\tau \right) }\right) \,\frac{d\tau }{%
\left( t-\tau \right) ^{3/2}}.
\end{eqnarray*}%
Take $a=\frac{x}{2\sqrt{\kappa }}$, $q\left( t\right) =C\,t^{\mu }$, and
apply the integral given in (\ref{Convolution_power_law}), to obtain
\begin{equation*}
q\left( x,t\right) =\frac{C}{\sqrt{\pi }}2^{\mu +1/2}\,\Gamma \left( \mu
+1\right) \,t^{\mu }\,\exp \left( -\frac{x^{2}}{8t}\right) \,D_{-1-2\mu
}\left( \frac{x}{\sqrt{2\kappa t}}\right) .
\end{equation*}%
Note that, according to (\ref{D_nu(0)}), the heat flux at $x=0$ is the one
imposed at the boundary, i.e. (\ref{BC_power_law}),
\begin{equation*}
q\left( 0,t\right) =C\,t^{\mu }=q\left( t\right) .
\end{equation*}
\end{enumerate}
\end{example}

\subsection{Error functions}

\begin{theorem}
\label{Theorem: Laplace 4} For $a\in \mathbb{R}\setminus {0}$ and $b\in
\mathbb{C}$, the following convolution integral holds:%
\begin{eqnarray}
&&\int_{0}^{t}\frac{\exp \left( b^{2}\tau \right) \,\mathrm{erfc}\left( b%
\sqrt{\tau }\right) }{\left( t-\tau \right) ^{3/2}}\exp \left( -\frac{a^{2}}{%
t-\tau }\right) \,d\tau  \label{Int_ML_1} \\
&=&\frac{\sqrt{\pi }}{\left\vert a\right\vert }\exp \left( 2\left\vert
a\right\vert b+b^{2}t\right) \,\mathrm{erfc}\left( \frac{\left\vert
a\right\vert }{\sqrt{t}}+b\sqrt{t}\right) .  \notag
\end{eqnarray}
\end{theorem}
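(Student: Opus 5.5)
Since the integrand is a Laplace convolution, I would apply the convolution theorem (\ref{Convolution_Laplace}) as in Theorems \ref{Theorem: Laplace 1} and \ref{Theorem: Laplace 3}, with one factor coming from a Mittag-Leffler transform.

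\textbf{First factor.} By (\ref{ML_1/2,1}) we have $\exp(b^2\tau)\,\mathrm{erfc}(b\sqrt{\tau})=\mathrm{E}_{1/2,1}(-b\sqrt{\tau})$. Taking $\mu=\tfrac12$, $\nu=1$ and replacing $b$ by $-b$ in (\ref{L-1[ML]}) gives
\begin{equation*}
\mathcal{L}^{-1}\left[\frac{1}{\sqrt{s}\,(\sqrt{s}+b)};t\right]=\exp(b^{2}t)\,\mathrm{erfc}(b\sqrt{t}).
\end{equation*}

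\textbf{Second factor.} Write $\alpha=|a|$. Differentiating the first transform quoted in the proof of Theorem \ref{Theorem: Laplace 1} with respect to the parameter (or using the standard entry $\mathcal{L}^{-1}[\exp(-\alpha\sqrt{s});t]=\frac{\alpha}{2\sqrt{\pi}}t^{-3/2}\exp(-\alpha^2/(4t))$), and then replacing $\alpha$ by $2\alpha$, yields
\begin{equation*}
\mathcal{L}^{-1}\left[\exp(-2\alpha\sqrt{s});t\right]=\frac{\alpha}{\sqrt{\pi}}\,t^{-3/2}\exp\left(-\frac{\alpha^{2}}{t}\right).
\end{equation*}
The absolute value is needed because the transform requires a positive coefficient in front of $\sqrt{s}$. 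Since only $a^2$ enters the integrand, replacing $a$ by $|a|$ costs nothing. This is why the hypothesis is $a\in\mathbb{R}\setminus\{0\}$: at $a=0$ the kernel becomes non-integrable.

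\textbf{Combining.} By (\ref{Convolution_Laplace}), the integral in (\ref{Int_ML_1}) equals
\begin{equation*}
\frac{\sqrt{\pi}}{|a|}\,\mathcal{L}^{-1}\left[\frac{\exp(-2|a|\sqrt{s})}{\sqrt{s}\,(\sqrt{s}+b)};t\right].
\end{equation*}
The remaining step is the tabulated inverse transform (Prudnikov Vol.~5, §2.2.2, or Jaeger's tables)
\begin{equation*}
\mathcal{L}^{-1}\left[\frac{\exp(-k\sqrt{s})}{\sqrt{s}\,(\sqrt{s}+b)};t\right]=\exp(bk+b^{2}t)\,\mathrm{erfc}\left(\frac{k}{2\sqrt{t}}+b\sqrt{t}\right).
\end{equation*}
Setting $k=2|a|$ gives exactly the right-hand side of (\ref{Int_ML_1}).

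\textbf{Main obstacle.} The one step that needs care is confirming this last transform with the correct sign conventions for complex $b$. I would verify it as follows. Write
\begin{equation*}
\frac{1}{\sqrt{s}(\sqrt{s}+b)}=\frac{1}{b}\left(\frac{1}{\sqrt{s}}-\frac{1}{\sqrt{s}+b}\right)
\end{equation*}
and use shift and convolution identities. Alternatively, check that the right-hand side satisfies the heat equation in the variable $k$ (with $t$ as time), has the correct value at $k=0$ (namely the Mittag-Leffler expression above), and decays as $k\to\infty$; uniqueness then identifies it. The identity holds for every $b\in\mathbb{C}$ by analytic continuation, since both sides are entire in $b$. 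Finally, the hypotheses of the convolution theorem hold: $\exp(b^2\tau)\,\mathrm{erfc}(b\sqrt{\tau})$ is continuous and of exponential order, and the kernel $\tau^{-3/2}\exp(-a^2/\tau)$ is continuous and bounded on $[0,\infty)$ for $a\neq0$.
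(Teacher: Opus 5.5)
Your proposal is correct and takes essentially the same route as the paper. Both arguments combine the Mittag-Leffler transform $\mathcal{L}^{-1}[s^{-1/2}/(s^{1/2}+b);t]=\exp(b^2t)\,\mathrm{erfc}(b\sqrt{t})$, the transform of $\exp(-a\sqrt{s})$, the tabulated transform of $s^{-1/2}(s^{1/2}+b)^{-1}\exp(-a\sqrt{s})$, and the convolution theorem; your explicit choice $k=2|a|$ states the rescaling that the paper leaves implicit in ``expand the result'' (its intermediate display writes $\exp(-a^2/(t-\tau))$ where $\exp(-a^2/(4(t-\tau)))$ is meant).
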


\begin{proof}
According to (\ref{L-1[ML]}) and (\ref{ML_1/2,1}), consider the inverse
Laplace transform:
\begin{equation*}
\mathcal{L}^{-1}\left[ \frac{s^{-1/2}}{s^{1/2}+b};t\right] =\mathrm{E}%
_{1/2,1}\left( -b\sqrt{t}\right) =\exp \left( b^{2}t\right) \,\mathrm{erfc}%
\left( b\sqrt{t}\right) ,
\end{equation*}%
as well as the inverse Laplace transforms \cite[Eqns. 2.2.3(6)\& 2.2.1(9)]%
{Prudnikov5}:%
\begin{eqnarray*}
\mathcal{L}^{-1}\left[ \exp \left( -a\sqrt{s}\right) ;t\right] &=&\frac{a}{2%
\sqrt{\pi }t^{3/2}}\exp \left( -\frac{a^{2}}{4t}\right) , \\
\mathcal{L}^{-1}\left[ \frac{s^{-1/2}}{s^{1/2}+b}\exp \left( -a\sqrt{s}%
\right) ;t\right] &=&\exp \left( ab+b^{2}t\right) \,\mathrm{erfc}\left(
\frac{a}{2\sqrt{t}}+b\sqrt{t}\right) ,
\end{eqnarray*}%
Thus, applying the Laplace convolution theorem (\ref{Convolution_Laplace}),
we obtain for $a>0$%
\begin{eqnarray*}
&&\frac{a}{2\sqrt{\pi }}\int_{0}^{t}\frac{\exp \left( b^{2}\tau \right) \,%
\mathrm{erfc}\left( b\sqrt{\tau }\right) }{\left( t-\tau \right) ^{3/2}}\exp
\left( -\frac{a^{2}}{t-\tau }\right) \,d\tau \\
&=&\mathcal{L}^{-1}\left[ \frac{s^{-1/2}}{s^{1/2}+b}\exp \left( -a\sqrt{s}%
\right) ;t\right] =e^{ab+b^{2}t}\,\mathrm{erfc}\left( \frac{a}{2\sqrt{t}}+b%
\sqrt{t}\right) .
\end{eqnarray*}%
Expand the result for $a\in \mathbb{R}\setminus {0}$ to complete the proof.
\end{proof}

\begin{example}
The integral (\ref{Int_ML_1}) appears in heat conduction in a semi-infinite
medium initially at zero temperature with a heat convection on its surface.

\begin{enumerate}
\item Physical setup \newline
Consider a semi-infinite solid ($x\geq 0$) with thermal diffusivity\ $\kappa
$,and thermal conductivity $K$, which is initially at zero temperature. For $%
t>0$, the boundary at $x=0$ is exposed to a convective flux due to a
constant environment temperature $T_{0}$, with $H$ the heat transfer
coefficient between the boundary and the environment.

\item Governing Equations

\begin{itemize}
\item Heat equation
\begin{equation}
\frac{\partial T\left( x,t\right) }{\partial t}=\kappa \frac{\partial
^{2}T\left( x,t\right) }{\partial x^{2}},\quad x>0,\,t>0.
\label{Heat_Equation}
\end{equation}

\item Boundary conditions
\begin{equation}
-\left. \frac{\partial T\left( x,t\right) }{\partial x}\right\vert
_{x=0}=h\,[T_{0}-T(0,t)],\quad h=\frac{H}{K}.  \label{BC_convection}
\end{equation}%
\begin{equation}
\lim_{x\rightarrow \infty }T\left( x,t\right) =0.  \label{BC_at_infinity}
\end{equation}

\item Initial condition
\begin{equation}
T(x,0)=0.  \label{IC_t=0}
\end{equation}
\end{itemize}

\item The surface convection\ can be solved in the Laplace domain as
follows. Apply the Laplace transform to the heat equation (\ref%
{Heat_Equation}), taking into account (\ref{IC_t=0})\ and the derivative
theorem of the Laplace transform (\ref{Derivative_Laplace}), to obtain \
\begin{equation}
s\,\mathcal{L}\left[ T\left( x,t\right) ;s\right] =\kappa \frac{\partial ^{2}%
}{\partial x^{2}}\mathcal{L}\left[ T\left( x,t\right) ;s\right] .
\label{ODE_Laplace}
\end{equation}%
Solve for (\ref{ODE_Laplace}), taking into account (\ref{BC_at_infinity}),
to obtain%
\begin{equation*}
\mathcal{L}\left[ T\left( x,t\right) ;s\right] =\mathcal{L}\left[ T\left(
0,t\right) ;s\right] \exp \left( -\sqrt{\frac{s}{\kappa }}x\right) ,
\end{equation*}%
thus%
\begin{equation}
-\left. \frac{\partial }{\partial x}\mathcal{L}\left[ T\left( x,t\right) ;s%
\right] \right\vert _{x=0}=\mathcal{L}\left[ T\left( 0,t\right) ;s\right] \,%
\sqrt{\frac{s}{\kappa }}.  \label{Ec_1}
\end{equation}%
Now, apply the Laplace transform to the boundary condition (\ref%
{BC_convection}), taking into account that $\mathcal{L}\left[ 1;s\right] =%
\frac{1}{s}$ \cite[Eqn. 1.3]{Schiff}, to obtain%
\begin{equation}
-\left. \frac{\partial }{\partial x}\mathcal{L}\left[ T\left( x,t\right) ;s%
\right] \right\vert _{x=0}=h\left[ \frac{T_{0}}{s}-\mathcal{L}\left[ T\left(
0,t\right) ;s\right] \right] .  \label{Ec_2}
\end{equation}%
From (\ref{Ec_1})\ and (\ref{Ec_2}), we obtain%
\begin{equation*}
\frac{1}{T_{0}}\mathcal{L}\left[ T\left( 0,t\right) ;s\right] =\frac{b}{%
s\left( \sqrt{s}+b\right) },\quad b=h\sqrt{\kappa }.
\end{equation*}%
Apply the inverse Laplace transform given in \cite[Eqn. 2.1.7(28)]%
{Prudnikov5}, to obtain%
\begin{equation}
T\left( 0,t\right) =T_{0}\left[ 1-\exp \left( b^{2}t\right) \,\mathrm{erfc}%
\left( b\sqrt{t}\right) \right] .  \label{BC_T(0,t)}
\end{equation}

\item According to \cite[Sect. 2.5]{Jaeger}, the solution of (\ref%
{Heat_Equation}), (\ref{BC_T(0,t)})\ and (\ref{IC_t=0}) is given by the
convolution integral:
\begin{equation}
T\left( x,t\right) =\frac{a}{\sqrt{\pi }}\int_{0}^{t}\frac{T\left( 0,\tau
\right) }{\left( t-\tau \right) ^{3/2}}\exp \left( -\frac{a^{2}}{t-\tau }%
\right) \,d\tau ,\quad a=\frac{x}{2\sqrt{\kappa }}.
\label{T(x,t)_convolution_Jaeger}
\end{equation}%
Note that, by performing the substitutions $\tau \mapsto t-\tau $ and $%
u^{2}=a^{2}/\tau $, and taking into account the definition of the
complementary error function (\ref{erfc_def}), we have
\begin{eqnarray}
&&\frac{a}{\sqrt{\pi }}\int_{0}^{t}\frac{1}{\left( t-\tau \right) ^{3/2}}%
\exp \left( -\frac{a^{2}}{t-\tau }\right) \,d\tau  \label{Int_heat} \\
&=&\int_{0}^{t}\frac{1}{\tau ^{3/2}}\exp \left( -\frac{a^{2}}{\tau }\right)
\,d\tau  \notag \\
&=&\frac{2}{\sqrt{\pi }}\int_{a/\sqrt{t}}^{\infty }\exp \left( -u^{2}\right)
\,du=\mathrm{erfc}\left( \frac{a}{\sqrt{t}}\right) .  \notag
\end{eqnarray}%
Therefore, inserting (\ref{BC_T(0,t)})\ in (\ref{T(x,t)_convolution_Jaeger}%
),\ and applying (\ref{Int_heat})\ and the convolution integral given in (%
\ref{Int_ML_1}), we arrive at%
\begin{equation}
T\left( x,t\right) =T_{0}\left[ \mathrm{erfc}\left( \frac{x}{2\sqrt{\kappa t}%
}\right) -e^{hx+h^{2}\kappa t}\,\mathrm{erfc}\left( \frac{x}{2\sqrt{\kappa t}%
}+h\sqrt{\kappa t}\right) \right] .  \label{T(x,t)_radiation}
\end{equation}

\item Despite the fact that the boundary value problem given in (\ref%
{Heat_Equation})-(\ref{IC_t=0})\ is solved in \cite[Sect. 2.7]{Jaeger}
without using the Laplace transform, we present here an alternative
derivation to arrive at (\ref{T(x,t)_radiation}).
\end{enumerate}
\end{example}

\begin{corollary}
Applying (\ref{D^n[exp*erfc]}), repeated differentiation of (\ref{Int_ML_1}%
)\ with respect to $b$ leads to the following result for $a\in \mathbb{R}%
\setminus {0}$, $b\in
\mathbb{C}
$, and $n=0,1,2,\ldots $%
\begin{eqnarray}
&&\int_{0}^{t}\frac{\tau ^{n/2}\,H_{-n-1}\left( b\sqrt{\tau }\right) }{%
\left( t-\tau \right) ^{3/2}}\exp \left( -\frac{a^{2}}{t-\tau }\right) d\tau
\label{Convolution_Hermite_n} \\
&=&\frac{\sqrt{\pi }\,t^{n/2}}{\left\vert a\right\vert }\exp \left( -\frac{%
a^{2}}{t}\right) \,H_{-n-1}\left( \frac{\left\vert a\right\vert }{\sqrt{t}}+b%
\sqrt{t}\right) .  \notag
\end{eqnarray}
\end{corollary}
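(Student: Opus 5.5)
The plan is to differentiate the identity (\ref{Int_ML_1}) of Theorem \ref{Theorem: Laplace 4} $n$ times with respect to $b$. Both sides are then rewritten with the derivative formula (\ref{D^n[exp*erfc]}) for the function $\exp(z^{2})\,\mathrm{erfc}(z)$.

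\textbf{Step 1 (right-hand side).} Set $z=\frac{|a|}{\sqrt{t}}+b\sqrt{t}$. Then
\begin{equation*}
z^{2}=\frac{a^{2}}{t}+2|a|b+b^{2}t ,
\end{equation*}
so the right-hand side of (\ref{Int_ML_1}) becomes
\begin{equation*}
\frac{\sqrt{\pi}}{|a|}\exp\left(-\frac{a^{2}}{t}\right)\exp(z^{2})\,\mathrm{erfc}(z).
\end{equation*}
Since $dz/db=\sqrt{t}$, the chain rule and (\ref{D^n[exp*erfc]}) give its $n$-th $b$-derivative as
\begin{equation*}
\frac{\sqrt{\pi}}{|a|}\exp\left(-\frac{a^{2}}{t}\right)t^{n/2}\,\frac{2^{n+1}n!\,(-1)^{n}}{\sqrt{\pi}}\,H_{-n-1}\!\left(\frac{|a|}{\sqrt{t}}+b\sqrt{t}\right).
\end{equation*}

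\textbf{Step 2 (left-hand side).} In the integrand, only the factor $\exp(b^{2}\tau)\,\mathrm{erfc}(b\sqrt{\tau})$ depends on $b$. It equals $\exp(w^{2})\,\mathrm{erfc}(w)$ with $w=b\sqrt{\tau}$. Its $n$-th $b$-derivative is therefore
\begin{equation*}
\tau^{n/2}\,\frac{2^{n+1}n!\,(-1)^{n}}{\sqrt{\pi}}\,H_{-n-1}(b\sqrt{\tau}).
\end{equation*}

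\textbf{Step 3 (conclusion).} Equate the two derivatives and cancel the common constant $2^{n+1}n!(-1)^{n}/\sqrt{\pi}$. This yields (\ref{Convolution_Hermite_n}). The case $n=0$ is just (\ref{Int_ML_1}) itself, because $H_{-1}(z)=\frac{\sqrt{\pi}}{2}e^{z^{2}}\mathrm{erfc}(z)$ by (\ref{D^n[exp*erfc]}) with $n=0$. This gives a consistency check on the normalization.

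\textbf{Main obstacle.} The one step needing real care is justifying differentiation under the integral sign. I would argue via analyticity in $b$. For fixed $t>0$ and $a\neq 0$, the factor $\exp(-a^{2}/(t-\tau))(t-\tau)^{-3/2}$ is bounded on $[0,t]$, since it vanishes as $\tau\to t$. The factor $\exp(b^{2}\tau)\,\mathrm{erfc}(b\sqrt{\tau})$ is entire in $b$ and jointly continuous, hence uniformly bounded for $\tau\in[0,t]$ and $b$ in any compact set. The integral is therefore an entire function of $b$, and its derivatives may be taken inside the integral, for instance by Morera's theorem together with Cauchy's integral formula.
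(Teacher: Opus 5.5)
Your proposal is correct and follows exactly the route the paper indicates: differentiate (\ref{Int_ML_1}) $n$ times with respect to $b$, apply (\ref{D^n[exp*erfc]}) via the chain rule with $z=|a|/\sqrt{t}+b\sqrt{t}$ on the right and $w=b\sqrt{\tau}$ on the left, and cancel the common constant $2^{n+1}n!\,(-1)^{n}/\sqrt{\pi}$. The paper does not spell out why differentiation under the integral sign is allowed, and your analyticity argument supplies it: the kernel is bounded because $a\neq 0$, and the $b$-dependent factor is entire and locally uniformly bounded.
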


\begin{theorem}
\label{Theorem: Laplace 5} For $a\in
\mathbb{C}
$, the following convolution integrals hold:%
\begin{eqnarray}
\int_{0}^{t}\frac{\exp \left( a^{2}\tau \right) \,\mathrm{erf}\left( a\sqrt{%
\tau }\right) }{\tau \sqrt{t-\tau }}\,d\tau &=&\frac{\pi }{\sqrt{t}}\,%
\mathrm{erfi}\left( a\sqrt{t}\right) ,  \label{Convolution_erfi} \\
\int_{0}^{t}\frac{\exp \left( -a^{2}\tau \right) \,\mathrm{erfi}\left( a%
\sqrt{\tau }\right) }{\tau \sqrt{t-\tau }}\,d\tau &=&\frac{\pi }{\sqrt{t}}\,%
\mathrm{erf}\left( a\sqrt{t}\right) .  \label{Convolution_erf}
\end{eqnarray}
\end{theorem}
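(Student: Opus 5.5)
Both identities follow from the Laplace convolution theorem (\ref{Convolution_Laplace}), in the same way as the earlier theorems in this section. We need three transforms. The first two are standard:
\begin{equation*}
\mathcal{L}\left[ \frac{1}{\sqrt{\pi t}};s\right] =\frac{1}{\sqrt{s}},\qquad
\mathcal{L}\left[ \exp \left( a^{2}t\right) \mathrm{erf}\left( a\sqrt{t}\right) ;s\right] =\frac{a}{\sqrt{s}\left( s-a^{2}\right) }.
\end{equation*}
The third is the transform of $\exp(a^{2}t)\,\mathrm{erf}(a\sqrt{t})/t$. Since $\int_{s}^{\infty }\frac{a\,dp}{\sqrt{p}\left( p-a^{2}\right) }=\log \frac{\sqrt{s}+a}{\sqrt{s}-a}$, the division-by-$t$ rule gives
\begin{equation*}
\mathcal{L}\left[ \frac{\exp \left( a^{2}t\right) \mathrm{erf}\left( a\sqrt{t}\right) }{t};s\right] =2\,\mathrm{artanh}\left( \frac{a}{\sqrt{s}}\right) .
\end{equation*}
This rule is legitimate because the function $\exp(a^{2}t)\,\mathrm{erf}(a\sqrt{t})/t$ is integrable at $0$: by (\ref{erf_series}) it behaves like $2a/\sqrt{\pi t}$ there.

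Convolving with $1/\sqrt{\pi t}$, the left-hand side of (\ref{Convolution_erfi}) equals $\sqrt{\pi }\,\mathcal{L}^{-1}\left[ \frac{2}{\sqrt{s}}\mathrm{artanh}\left( \frac{a}{\sqrt{s}}\right) ;t\right]$. Expanding $\mathrm{artanh}$ gives
\begin{equation*}
\frac{2}{\sqrt{s}}\mathrm{artanh}\left( \frac{a}{\sqrt{s}}\right) =2\sum_{k\geq 0}\frac{a^{2k+1}}{(2k+1)\,s^{k+1}},
\end{equation*}
which we invert termwise, justified for $|s|>|a|^{2}$. The result is
\begin{equation*}
2\sum_{k\geq 0}\frac{a^{2k+1}t^{k}}{(2k+1)\,k!}.
\end{equation*}
Comparing with $\mathrm{erfi}(z)=\frac{2}{\sqrt{\pi }}\sum_{k\geq 0}\frac{z^{2k+1}}{k!(2k+1)}$, which follows from (\ref{erfi_def}) and (\ref{erf_series}), this sum is exactly $\frac{\sqrt{\pi }}{\sqrt{t}}\mathrm{erfi}(a\sqrt{t})$. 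Multiplying by $\sqrt{\pi }$ gives the right-hand side $\frac{\pi }{\sqrt{t}}\mathrm{erfi}(a\sqrt{t})$ of (\ref{Convolution_erfi}).

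Identity (\ref{Convolution_erf}) then follows by replacing $a\mapsto ia$ in (\ref{Convolution_erfi}). Indeed, $\exp(-a^{2}\tau)\,\mathrm{erf}(ia\sqrt{\tau})=i\exp(-a^{2}\tau)\,\mathrm{erfi}(a\sqrt{\tau})$, and $\mathrm{erfi}(ia\sqrt{t})=\mathrm{erf}(ia\cdot i\sqrt{t})/i=i\,\mathrm{erf}(a\sqrt{t})$, using that $\mathrm{erf}$ is odd. The factors $i$ cancel. The substitution is valid for all complex $a$ because both sides of (\ref{Convolution_erfi}) are entire in $a$: the integrand is entire in $a$ and locally uniformly integrable, so the identity proved for real $a$ extends by analytic continuation.

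A direct series route is an alternative that avoids Laplace transforms entirely. Expand $\exp(a^{2}\tau)\,\mathrm{erf}(a\sqrt{\tau})/\tau$ as $\frac{2a}{\sqrt{\pi }}\sum_{k}\frac{(2a^{2})^{k}\tau ^{k-1/2}}{(2k+1)!!}$, then integrate term by term using the beta integral (\ref{Beta_def}), and simplify with (\ref{gamma(n+1/2)}). I expect the main obstacle to be exactly this Gamma-function bookkeeping: one must check that $\frac{2^{k}\,\mathrm{B}(k+\frac12,\frac12)}{(2k+1)!!}$ reduces to $\frac{\pi }{(2k+1)\,k!}$, which requires care with the double factorials.
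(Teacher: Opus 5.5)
Your proposal is correct and follows the paper's proof. Both apply the convolution theorem to $1/\sqrt{\pi t}$ and $e^{a^{2}t}\mathrm{erf}(a\sqrt{t})/t$, whose transform is $\log\frac{\sqrt{s}+a}{\sqrt{s}-a}$, and then substitute $a\mapsto ia$ for the second identity. The only differences are that you derive the two tabulated transforms yourself (via the division-by-$t$ rule and termwise inversion) rather than citing Prudnikov, and that your series alternative is the counterpart, for the first identity, of the paper's Dawson-series alternative for the second.
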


\begin{proof}
Apply the Laplace convolution theorem (\ref{Convolution_Laplace}), taking
into account the inverse Laplace transforms \cite[Eqns. 2.1.1(3)\
\&2.5.4.(14)-(15)]{Prudnikov5}:%
\begin{eqnarray*}
\mathcal{L}^{-1}\left[ \frac{1}{\sqrt{s}};t\right] &=&\frac{1}{\sqrt{\pi t}},
\\
\mathcal{L}^{-1}\left[ \log \left( \frac{\sqrt{s}+a}{\sqrt{s}-a}\right) ;t%
\right] &=&\frac{\exp \left( a^{2}t\right) \,\mathrm{erf}\left( a\sqrt{t}%
\right) }{t}, \\
\mathcal{L}^{-1}\left[ \frac{1}{\sqrt{s}}\log \left( \frac{\sqrt{s}+a}{\sqrt{%
s}-a}\right) ;t\right] &=&\sqrt{\frac{\pi }{t}}\,\mathrm{erfi}\left( a\sqrt{t%
}\right) ,
\end{eqnarray*}%
to obtain%
\begin{equation*}
\frac{1}{\sqrt{\pi }}\int_{0}^{t}\frac{\exp \left( a^{2}\tau \right) \,%
\mathrm{erf}\left( a\sqrt{\tau }\right) }{\tau \sqrt{t-\tau }}\,d\tau =%
\mathcal{L}^{-1}\left[ \frac{1}{\sqrt{s}}\log \left( \frac{\sqrt{s}+a}{\sqrt{%
s}-a}\right) ;t\right] =\sqrt{\frac{\pi }{t}}\,\mathrm{erfi}\left( a\sqrt{t}%
\right) ,
\end{equation*}%
thus (\ref{Convolution_erfi})\ is proved. Apply the substitution $a\mapsto
i\,a$ in (\ref{Convolution_erfi})\ and multiply the result by $\frac{1}{i}$,
to obtain (\ref{Convolution_erf}), taking into account the definition of the
imaginary error function (\ref{erfi_def}).

As an alternative proof of (\ref{Convolution_erf}), we proceed as follows.
Taking into account the property (\ref{Dawson_erfi}) and the expansion of
the Dawson function (\ref{Dawson_series}), rewrite the integral given in (%
\ref{Convolution_erf})\ as
\begin{eqnarray}
\int_{0}^{t}\frac{\exp \left( -a^{2}\tau \right) \,\mathrm{erfi}\left( a%
\sqrt{\tau }\right) }{\tau \sqrt{t-\tau }}\,d\tau &=&\frac{2}{\sqrt{\pi }}%
\int_{0}^{t}\frac{\mathrm{daw}\left( a\sqrt{\tau }\right) }{\tau \sqrt{%
t-\tau }}\,d\tau  \label{Int_dawson_1} \\
&=&\frac{2a}{\sqrt{\pi }}\sum_{k=0}^{\infty }\frac{\left( -a^{2}\right) ^{k}%
}{\left( \frac{3}{2}\right) _{k}}\underset{I}{\underbrace{\int_{0}^{t}\frac{%
\tau ^{k-1/2}}{\sqrt{t-\tau }}\,d\tau }}.  \notag
\end{eqnarray}%
The integral $I$ can be recast as a beta integral (\ref{Beta_def})\
performing the substitution $\tau =t\,u$,
\begin{equation}
I=t^{k}\int_{0}^{1}u^{k-1/2}\left( 1-u\right) ^{-1/2}du=t^{k}\frac{\Gamma
\left( k+\frac{1}{2}\right) \,\Gamma \left( \frac{1}{2}\right) }{\Gamma
\left( k+1\right) }=\pi \,t^{k}\,\frac{\left( \frac{1}{2}\right) _{k}}{k!},
\label{I_erfi__resultado}
\end{equation}%
where we have applied (\ref{Gamma(n+1)}) and (\ref{Gamma_particular}), as
well as the definition of the Pochhammer symbol (\ref{Pochhammer_def}).
Substitute back (\ref{I_erfi__resultado})\ in (\ref{Int_dawson_1}), and take
into account the definition of the generalized hypergeometric function (\ref%
{pFq_def}), to obtain:%
\begin{eqnarray*}
\int_{0}^{t}\frac{\exp \left( -a^{2}\tau \right) \,\mathrm{erfi}\left( a%
\sqrt{\tau }\right) }{\tau \sqrt{t-\tau }}\,d\tau &=&2a\,\sqrt{\pi }%
\sum_{k=0}^{\infty }\frac{\left( \frac{1}{2}\right) _{k}\,\left(
-a^{2}t\right) ^{k}}{k!\left( \frac{3}{2}\right) _{k}} \\
&=&2a\,\sqrt{\pi }\,_{1}F_{1}\left(
\begin{array}{c}
\frac{1}{2} \\
\frac{3}{2}%
\end{array}%
;-a^{2}t\right) .
\end{eqnarray*}%
Finally, apply the reduction formula given in (\ref{1F1_reduction_4b})\ and
the definition of the imaginary error function (\ref{erfi_def})\ to complete
the proof of (\ref{Convolution_erf}).
\end{proof}

\begin{example}
The integral obtained in (\ref{Convolution_erf})\ arises in the heat
conduction in a semi-infinite medium initially at zero temperature, with a
prescribed heat flux at the surface. The heat flux models coupled processes
of kinetic decay and exponential thermal dissipation.

\begin{enumerate}
\item Physical setup \newline
Consider a semi-infinite medium of thermal diffusivity $\kappa $, and
thermal conductivity $K$, located at $x\geq 0$, which is initially at zero
temperature. Consider that the surface of the medium (i.e. at $x=0$) is
exposed to the following heat flux, for $t>0$,
\begin{equation}
q\left( t\right) =q\exp \left( -a^{2}t\right) \,\frac{\mathrm{erfi}\left(
\,a\,\sqrt{t}\right) }{t}=\frac{2q}{\sqrt{\pi }}\,\frac{\mathrm{daw}\left( a%
\sqrt{t}\right) }{t}.  \label{q(t)_erfi}
\end{equation}%
The exponential term $\exp \left( -a^{2}\tau \right) $ represents an actual
physical dissipation due to a pulsed laser ablation. When a beam strikes a
surface, it generates an ionized gas cloud (plasma). This plasma absorbs a
fraction of the incident laser energy via inverse Bremsstrahlung.
Consequently, the net energy that successfully penetrates the plasma and
reaches the actual surface of the solid decreases exponentially over time
\cite{Marla}. The term with the imaginary error function $\mathrm{erfi}%
\left( \,a\,\sqrt{t}\right) /t$ typically arises in first-order chemical
reaction-diffusion systems \cite{Bard}. Note that $q\left( t\right) $ is
physically admissible since, according to (\ref{Dawson_series}), we have
\begin{equation*}
q\left( t\right) \approx \frac{q}{\sqrt{t}},\quad t\rightarrow 0.
\end{equation*}%
This matches exactly the behavior of an instantaneous impact heat source
(Dirac delta function) or the onset of a thermal pulse where the surface
spatial gradient is infinitely steep. Also, according to (\ref{Dawson_z->inf}%
), we have
\begin{equation*}
q\left( t\right) \approx \frac{q}{a\sqrt{\pi }}t^{-3/2},\quad t\rightarrow
\infty .
\end{equation*}%
This physically models the complete depletion of surface reactants, allowing
the solid to cool down by pure internal conduction into the semi-infinite
medium.

\item Governing equations

\begin{itemize}
\item Heat equation
\begin{equation}
\frac{\partial T\left( x,t\right) }{\partial t}=\kappa \frac{\partial
^{2}T\left( x,t\right) }{\partial x^{2}},\quad x>0,\,t>0.
\label{Heat_Equation_erfi}
\end{equation}

\item Boundary conditions
\begin{equation}
-K\left. \frac{\partial T\left( x,t\right) }{\partial x}\right\vert
_{x=0}=q(t),  \label{BC_erfi}
\end{equation}%
\begin{equation}
\lim_{x\rightarrow \infty }T\left( x,t\right) =0.  \label{BC_infinity_erfi}
\end{equation}

\item Initial condition
\begin{equation}
T(x,0)=0.  \label{IC_erfi}
\end{equation}
\end{itemize}

\item According to \cite[Sect. 2.9]{Jaeger}, the solution of (\ref%
{Heat_Equation_erfi})-(\ref{IC_erfi}) is given by the convolution integral:
\begin{equation*}
T\left( x,t\right) =\frac{\sqrt{\kappa }}{\sqrt{\pi }K}\int_{0}^{t}\frac{%
q\left( \tau \right) }{\sqrt{t-\tau }}\exp \left( -\frac{b^{2}}{t-\tau }%
\right) \,d\tau ,\quad b=\frac{x}{2\sqrt{\kappa }},
\end{equation*}%
thus, taking into account the flux given in (\ref{q(t)_erfi}), we have%
\begin{equation*}
T\left( 0,t\right) =\frac{q\sqrt{\kappa }}{\sqrt{\pi }K}\int_{0}^{t}\frac{%
\exp \left( -a^{2}\tau \right) \,\mathrm{erfi}\left( \,a\,\sqrt{\tau }%
\right) }{\tau \sqrt{t-\tau }}\,d\tau .
\end{equation*}%
Therefore, applying the convolution integral given in (\ref{Convolution_erf}%
), we obtain:%
\begin{equation*}
T\left( 0,t\right) =\frac{q\sqrt{\pi \kappa }}{K\sqrt{t}}\,\mathrm{erf}%
\left( a\sqrt{t}\right) .
\end{equation*}%
Note that, according to (\ref{erf_series}), the surface temperature at the
beginning of the process is bounded since
\begin{equation*}
\lim_{t\rightarrow 0}T\left( 0,t\right) =2a\frac{q\sqrt{\kappa }}{K}<\infty .
\end{equation*}%
Also, taking into account (\ref{erf_asymptotic}), we have
\begin{equation*}
T\left( 0,t\right) \approx \frac{q\sqrt{\pi \kappa }}{K\sqrt{t}},\quad
t\rightarrow \infty ,
\end{equation*}%
thus in the long term, the applied heat has diffused into the interior of
the semi-infinite solid, cooling the surface via pure internal
one-dimensional conduction towards the infinite sink.
\end{enumerate}
\end{example}

\begin{corollary}
Perform the change of variables $\tau =t\,v^{2}$ and $z=a\sqrt{t}$ in (\ref%
{Convolution_erfi}) and (\ref{Convolution_erf}), and apply (\ref{Dawson_erfi}%
) to obtain the following integral representations for $z\in
\mathbb{C}
$,
\begin{eqnarray}
\mathrm{erfi}\left( z\right) &=&\frac{2}{\pi }\int_{0}^{1}\frac{\exp \left(
z^{2}v^{2}\right) \,\mathrm{erf}\left( z\,v\right) }{v\sqrt{1-v^{2}}}\,dv,
\label{erfi_integral} \\
\mathrm{erf}\left( z\right) &=&\frac{4}{\pi ^{3/2}}\int_{0}^{1}\frac{\mathrm{%
daw}\left( z\,v\right) }{v\sqrt{1-v^{2}}}\,dv.  \label{erf_integral}
\end{eqnarray}
\end{corollary}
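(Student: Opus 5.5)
The plan is to derive both representations directly from the convolution integrals (\ref{Convolution_erfi}) and (\ref{Convolution_erf}) by a change of variables, and then extend them to all $z\in\mathbb{C}$ by analytic continuation.

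\textbf{Substitution.} In (\ref{Convolution_erfi}) I will substitute $\tau=t\,v^{2}$, so that $d\tau=2t\,v\,dv$, with $v$ running from $0$ to $1$. Then
\begin{equation*}
\frac{d\tau}{\tau\sqrt{t-\tau}}=\frac{2t\,v\,dv}{t\,v^{2}\sqrt{t}\sqrt{1-v^{2}}}=\frac{2\,dv}{\sqrt{t}\,v\sqrt{1-v^{2}}}.
\end{equation*}
Also $a^{2}\tau=a^{2}t\,v^{2}$ and $a\sqrt{\tau}=a\sqrt{t}\,v$. Setting $z=a\sqrt{t}$, the left-hand side of (\ref{Convolution_erfi}) becomes
\begin{equation*}
\frac{2}{\sqrt{t}}\int_{0}^{1}\frac{\exp(z^{2}v^{2})\,\mathrm{erf}(zv)}{v\sqrt{1-v^{2}}}\,dv,
\end{equation*}
while the right-hand side is $\pi\,\mathrm{erfi}(z)/\sqrt{t}$. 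Cancelling $1/\sqrt{t}$ gives (\ref{erfi_integral}).

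For (\ref{erf_integral}) the same substitution is applied to (\ref{Convolution_erf}). Before doing so, I will rewrite the integrand using (\ref{Dawson_erfi}) in the form $\exp(-z^{2}v^{2})\,\mathrm{erfi}(zv)=\frac{2}{\sqrt{\pi}}\,\mathrm{daw}(zv)$. The left-hand side then becomes
\begin{equation*}
\frac{4}{\sqrt{\pi t}}\int_{0}^{1}\frac{\mathrm{daw}(zv)}{v\sqrt{1-v^{2}}}\,dv,
\end{equation*}
and it equals $\pi\,\mathrm{erf}(z)/\sqrt{t}$. Dividing through gives the constant $4/\pi^{3/2}$ in (\ref{erf_integral}).

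\textbf{Extension to all $z\in\mathbb{C}$.} Since $t>0$ and $a\in\mathbb{C}$ are arbitrary, $z=a\sqrt{t}$ already covers all of $\mathbb{C}$. To avoid relying on the branch of $\sqrt{t}$, I will also argue by analytic continuation. The integrands are entire in $z$, because $\mathrm{erf}(zv)/v$ and $\mathrm{daw}(zv)/v$ are entire in $z$ with no singularity at $v=0$, by (\ref{erf_series}) and (\ref{Dawson_series}). Near $v=1$ the only singular factor is $(1-v^{2})^{-1/2}$, which is integrable. Hence, by dominated convergence on compact sets, both sides are entire functions of $z$. They agree on $z>0$, so they agree everywhere.

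\textbf{Main obstacle.} The only delicate point is this justification of analyticity, namely the uniform integrability near $v=0$ and $v=1$. Everything else is bookkeeping of constants.
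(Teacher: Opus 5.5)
Your proof is correct and is the paper's argument: substitute $\tau=t\,v^{2}$, $z=a\sqrt{t}$ in (\ref{Convolution_erfi}) and (\ref{Convolution_erf}), and use $\exp(-z^{2})\,\mathrm{erfi}(z)=\frac{2}{\sqrt{\pi}}\,\mathrm{daw}(z)$ from (\ref{Dawson_erfi}); your constants $2/\pi$ and $4/\pi^{3/2}$ are right. The analytic-continuation paragraph is harmless but not needed, since, as you note yourself, with $t>0$ fixed and $a\in\mathbb{C}$ arbitrary, $z=a\sqrt{t}$ already covers all of $\mathbb{C}$.
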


\begin{theorem}
\label{Theorem: Laplace 6} For $a\in
\mathbb{C}
$ and $\Re \left( \nu \right) \geq -\frac{1}{2}$, the following convolution
integral holds true:\
\begin{eqnarray}
&&\int_{0}^{t}\frac{\tau ^{\nu }}{t-\tau }\,_{1}F_{1}\left(
\begin{array}{c}
\frac{1}{2} \\
\nu +1%
\end{array}%
;a^{2}\tau \right) \,\mathrm{erfi}\left( a\sqrt{t-\tau }\right) \,d\tau
\label{Int_1F1_erfi} \\
&=&\frac{2a\,t^{\nu +1/2}\,\Gamma \left( \nu +1\right) }{\Gamma \left( \nu +%
\frac{3}{2}\right) }\,_{2}F_{2}\left(
\begin{array}{c}
1,1 \\
\frac{3}{2},\nu +\frac{3}{2}%
\end{array}%
;a^{2}t\right) .  \notag
\end{eqnarray}
\end{theorem}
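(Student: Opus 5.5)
The plan is to use the Laplace convolution theorem (\ref{Convolution_Laplace}), with a term-by-term series computation as a cross-check, mirroring the alternative proofs given for Theorems \ref{Theorem: Laplace 2} and \ref{Theorem: Laplace 5}.

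For the series route, first expand $\mathrm{erfi}(a\sqrt{t-\tau})/(t-\tau)$. From (\ref{erf_series}) and (\ref{erfi_def}),
\begin{equation*}
\frac{\mathrm{erfi}\left( a\sqrt{u}\right) }{u}=\frac{2a}{\sqrt{\pi }}\sum_{m=0}^{\infty }\frac{\left( \frac{1}{2}\right) _{m}a^{2m}u^{m-1/2}}{m!\left( \frac{3}{2}\right) _{m}}.
\end{equation*}
Next, expand the Kummer function as $\sum_{k}\frac{(1/2)_k a^{2k}\tau^{\nu+k}}{k!(\nu+1)_k}$. Each product term then gives a beta integral (\ref{Beta_def}):
\begin{equation*}
\int_0^t \tau^{\nu+k}(t-\tau)^{m-1/2}d\tau = t^{\nu+k+m+1/2}\,\frac{\Gamma(\nu+k+1)\Gamma(m+\frac12)}{\Gamma(\nu+k+m+\frac32)}.
\end{equation*}
The hypothesis $\Re(\nu)\ge -\frac12$ (indeed $\Re\nu>-1$ suffices) guarantees convergence at $\tau=0$. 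Termwise integration is justified because both series are entire with positive-type majorants on $[0,t]$.

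Collecting powers by $N=k+m$ gives a double sum. The inner sum over $k$ at fixed $N$ should collapse, since $\Gamma(\nu+k+1)/(\nu+1)_k=\Gamma(\nu+1)$ cancels the Kummer denominator. What remains is
\begin{equation*}
\frac{2a\,\Gamma(\nu+1)}{\sqrt\pi}\sum_N \frac{a^{2N}t^{\nu+N+1/2}}{\Gamma(\nu+N+\frac32)}\sum_{k+m=N}\frac{(\frac12)_k}{k!}\,\frac{(\frac12)_m\Gamma(m+\frac12)}{m!(\frac32)_m}.
\end{equation*}
Here $\Gamma(m+\frac12)=\sqrt\pi(\frac12)_m$. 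The main obstacle is evaluating this convolution sum in closed form. I would rewrite it as $\sqrt{\pi}\,\frac{(1/2)_N}{N!}\,{}_3F_2\left(-N,\frac12,\frac12;\frac32,\frac12-N;1\right)$ (after reversing via (\ref{Pochhammer_1})), and then aim to show it equals $\sqrt\pi\,\frac{N!}{(3/2)_N}$.

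That target value is what the right-hand side demands. The coefficient of $(a^2t)^N$ in $\frac{2a t^{\nu+1/2}\Gamma(\nu+1)}{\Gamma(\nu+3/2)}{}_2F_2$ is $\frac{N!}{(3/2)_N(\nu+3/2)_N}$, and $\Gamma(\nu+\frac32)(\nu+\frac32)_N=\Gamma(\nu+N+\frac32)$ matches the denominator above.

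Rather than fight the ${}_3F_2$ directly (it is Saalschützian, so Pfaff--Saalschütz may apply after checking balance), I would obtain the identity from a generating function. The sum is the Cauchy product coefficient of $(1-x)^{-1/2}$ and $\sum_m \frac{(1/2)_m^2}{m!(3/2)_m}x^m=\frac{\arcsin\sqrt x}{\sqrt x}$. Since $\frac{\arcsin\sqrt x}{\sqrt{x(1-x)}}=\sum_N\frac{N!\,(4x)^N}{(2N+1)!}\cdot\frac{(2N)!}{N!^2 4^N}\cdots$, which is the classical series $\sum_N \frac{N!}{(3/2)_N}x^N$, the identity follows.

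Equivalently, and more in the paper's style, the same result falls out of the Laplace convolution theorem. Use $\mathcal L^{-1}[s^{-1/2}\log\frac{\sqrt s+a}{\sqrt s-a}]=\sqrt{\pi/t}\,\mathrm{erfi}(a\sqrt t)$, divided appropriately, together with $\mathcal L[t^\nu{}_1F_1(\frac12;\nu+1;a^2t)]=\Gamma(\nu+1)s^{-\nu-1}(1-a^2/s)^{-1/2}$. The product transform is then $\Gamma(\nu+1)s^{-\nu-1/2}\frac{\log\frac{\sqrt s+a}{\sqrt s-a}}{\sqrt{s-a^2}}$. Expanding $\frac{\mathrm{artanh}(y)}{\sqrt{1-y^2}}$ with $y=a/\sqrt s$ reproduces the same $\arcsin$-type series, and inverting termwise via $\mathcal L^{-1}[s^{-\beta}]$ yields the ${}_2F_2$.
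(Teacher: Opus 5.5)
Your series argument is correct and complete, and it takes a genuinely different route from the paper's. The paper applies the convolution theorem with three tabulated pairs: $\arcsin(a/\sqrt{s})\leftrightarrow \mathrm{erfi}(a\sqrt{t})/(2t)$, $s^{-\nu}(s-a^{2})^{-1/2}\leftrightarrow t^{\nu-1/2}\,{}_1F_1(\tfrac{1}{2};\nu+\tfrac{1}{2};a^{2}t)/\Gamma(\nu+\tfrac{1}{2})$, and a table entry for the product $\arcsin(a/\sqrt{s})/(s^{\nu}\sqrt{s-a^{2}})$ that gives the ${}_2F_2$. It then shifts $\nu\mapsto\nu+\tfrac{1}{2}$.

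Your Beta-integral computation uses the cancellation $\Gamma(\nu+k+1)/(\nu+1)_k=\Gamma(\nu+1)$ and the Cauchy-product identity $\sum_{k+m=N}\frac{(1/2)_k}{k!}\frac{(1/2)_m^{2}}{m!\,(3/2)_m}=\frac{N!}{(3/2)_N}$. In effect it proves that last table entry instead of quoting it. The identity follows cleanly from Euler's transformation ${}_2F_1(\tfrac{1}{2},\tfrac{1}{2};\tfrac{3}{2};x)=(1-x)^{1/2}\,{}_2F_1(1,1;\tfrac{3}{2};x)$. It also follows, as you suspected, from Pfaff--Saalsch\"{u}tz: your ${}_3F_2(-N,\tfrac{1}{2},\tfrac{1}{2};\tfrac{3}{2},\tfrac{1}{2}-N;1)$ is indeed balanced and equals $\frac{(1)_N^{2}}{(3/2)_N(1/2)_N}$.

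Your route is self-contained and shows the hypothesis can be weakened to $\Re(\nu)>-1$. The paper's route is shorter but rests entirely on the tables. Do clean up the garbled display for $\arcsin\sqrt{x}/\sqrt{x(1-x)}$. The factors you wrote multiply to $x^{N}/(N!\,(2N+1))$, whereas the correct coefficient is $4^{N}(N!)^{2}/(2N+1)!=N!/(3/2)_N$.

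Your closing Laplace-domain paragraph, however, is wrong as written and should be corrected or dropped. Dividing $\sqrt{\pi/t}\,\mathrm{erfi}(a\sqrt{t})$ by $\sqrt{t}$ is not an algebraic operation on its transform $s^{-1/2}\log\frac{\sqrt{s}+a}{\sqrt{s}-a}$. Moreover, $\log\frac{\sqrt{s}+a}{\sqrt{s}-a}$ by itself is the transform of $e^{a^{2}t}\,\mathrm{erf}(a\sqrt{t})/t$ (as in the proof of Theorem \ref{Theorem: Laplace 5}), not of $\mathrm{erfi}(a\sqrt{t})/t$.

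The correct pair, which you can check termwise from your own erfi series, is $\mathcal{L}[\mathrm{erfi}(a\sqrt{t})/t;s]=2\arcsin(a/\sqrt{s})$. So the product transform is $2\Gamma(\nu+1)\,s^{-\nu-1}\arcsin(y)/\sqrt{1-y^{2}}$ with $y=a/\sqrt{s}$, not the artanh version. The artanh version does not reproduce the same series: the $y^{3}$ coefficient of $\mathrm{artanh}(y)/\sqrt{1-y^{2}}$ is $\frac{5}{6}$, while that of $\arcsin(y)/\sqrt{1-y^{2}}=\sum_{N}\frac{N!}{(3/2)_N}y^{2N+1}$ is $\frac{2}{3}$. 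With the $\arcsin$ pair, termwise inversion of $s^{-\nu-N-3/2}$ yields exactly the ${}_2F_2$, which is essentially the paper's proof.
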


\begin{proof}
Apply the Laplace convolution theorem (\ref{Convolution_Laplace}), taking
into account the following inverse Laplace transforms (see \cite[Eqns.
2.6.1(8)\&(11)]{Prudnikov5})%
\begin{eqnarray*}
\mathcal{L}^{-1}\left[ \arcsin \left( a/\sqrt{s}\right) ;t\right] &=&\frac{%
\mathrm{erfi}\left( a\sqrt{t}\right) }{2t}, \\
\mathcal{L}^{-1}\left[ \frac{\arcsin \left( a/\sqrt{s}\right) }{s^{\nu }%
\sqrt{s-a^{2}}};t\right] &=&\frac{a\,t^{\nu }}{\Gamma \left( \nu +1\right) }%
\,_{2}F_{2}\left(
\begin{array}{c}
1,1 \\
\frac{3}{2},\nu +1%
\end{array}%
;a^{2}t\right) ,
\end{eqnarray*}%
and \cite[Eqn. 2.1.2(1)]{Prudnikov5}:%
\begin{equation*}
\mathcal{L}^{-1}\left[ \frac{1}{s^{\nu }\sqrt{s-a^{2}}};t\right] =\frac{%
t^{\nu -1/2}}{\Gamma \left( \nu +\frac{1}{2}\right) }\,_{1}F_{1}\left(
\begin{array}{c}
\frac{1}{2} \\
\frac{1}{2}+\nu%
\end{array}%
;a^{2}t\right) .
\end{equation*}
\end{proof}

\begin{corollary}
\label{Corollary Laplace 6}Take $\nu =0$ in (\ref{Int_1F1_erfi})\ and apply (%
\ref{1F1_reduction_3})\ to obtain:\
\begin{eqnarray}
&&\int_{0}^{t}e^{-a\,\tau }\,I_{0}\left( a\left( t-\tau \right) \right) \,%
\mathrm{erfi}\left( \sqrt{2a\,\tau }\right) \,\frac{d\tau }{\tau }
\label{Convolution_I0*erfi} \\
&=&4\sqrt{\frac{2a\,t}{\pi }}\,e^{-a\,t}\,_{2}F_{2}\left(
\begin{array}{c}
1,1 \\
\frac{3}{2},\frac{3}{2}%
\end{array}%
;2a\,t\right) ,  \notag
\end{eqnarray}%
or equivalently, according to (\ref{Dawson_erfi}),
\begin{eqnarray}
&&\int_{0}^{t}e^{-a\,\tau /2}\,I_{0}\left( \frac{a}{2}\,\tau \right) \,\frac{%
\mathrm{daw}\left( \sqrt{a\,\left( t-\tau \right) }\right) }{t-\tau }\,d\tau
\label{Convolution_I0_daw} \\
&=&2\sqrt{a\,t}\,e^{-a\,t}\,_{2}F_{2}\left(
\begin{array}{c}
1,1 \\
\frac{3}{2},\frac{3}{2}%
\end{array}%
;a\,t\right) .
\end{eqnarray}
\end{corollary}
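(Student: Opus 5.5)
The plan is to specialize Theorem \ref{Theorem: Laplace 6} and then perform elementary substitutions. Setting $\nu=0$ in (\ref{Int_1F1_erfi}) is allowed, since $\Re(\nu)\geq -\frac{1}{2}$. By (\ref{Gamma_particular}) and (\ref{Gamma_factorial}) we have $\Gamma(1)/\Gamma(\frac{3}{2})=2/\sqrt{\pi}$, so the right-hand side becomes $\frac{4a\sqrt{t}}{\sqrt{\pi}}\,{}_{2}F_{2}(1,1;\frac{3}{2},\frac{3}{2};a^{2}t)$. On the left, the reduction (\ref{1F1_reduction_3}) with $z=a^{2}\tau$ replaces ${}_{1}F_{1}(\frac{1}{2};1;a^{2}\tau)$ by $e^{a^{2}\tau/2}I_{0}(a^{2}\tau/2)$. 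This gives
\begin{equation*}
\int_{0}^{t}e^{a^{2}\tau /2}I_{0}\left( \frac{a^{2}\tau }{2}\right) \frac{\mathrm{erfi}\left( a\sqrt{t-\tau }\right) }{t-\tau }\,d\tau =\frac{4a\sqrt{t}}{\sqrt{\pi }}\,{}_{2}F_{2}\left( \begin{array}{c} 1,1 \\ \frac{3}{2},\frac{3}{2}\end{array};a^{2}t\right) .
\end{equation*}

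For (\ref{Convolution_I0*erfi}), I will substitute $a^{2}\mapsto 2a$ and then change variables $\tau\mapsto t-\tau$. The integrand turns into $e^{a(t-\tau)}I_{0}(a(t-\tau))\,\mathrm{erfi}(\sqrt{2a\tau})/\tau$. Pulling out the factor $e^{at}$ and moving it to the right-hand side produces $4\sqrt{2at/\pi}\,e^{-at}\,{}_{2}F_{2}(\ldots;2at)$, which is exactly the stated result.

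For (\ref{Convolution_I0_daw}), I will instead substitute $a^{2}\mapsto a$ and keep the original variable. By (\ref{Dawson_erfi}), $\mathrm{erfi}(\sqrt{a(t-\tau)})=\frac{2}{\sqrt{\pi}}e^{a(t-\tau)}\mathrm{daw}(\sqrt{a(t-\tau)})$. Combining this exponential with $e^{a\tau/2}$ gives $e^{at}e^{-a\tau/2}$. Dividing both sides by $\frac{2}{\sqrt{\pi}}e^{at}$ then turns the right-hand side $4\sqrt{at/\pi}$ into $2\sqrt{at}\,e^{-at}$, as claimed.

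No step is a genuine obstacle; the only care needed is bookkeeping. One must track which exponential factor is pulled out, check that the swap $\tau\mapsto t-\tau$ in the first identity matches the placement of $I_{0}$ and $\mathrm{erfi}$ in the statement, and make sure the square-root branches are consistent. For the latter, I would take $a>0$ first and extend to the remaining values of $a$ by analytic continuation, since both sides are analytic in $a$.
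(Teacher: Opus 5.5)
Your proposal is correct and follows the paper's route: specialize Theorem~\ref{Theorem: Laplace 6} at $\nu=0$ and use (\ref{1F1_reduction_3}) to replace ${}_1F_1(\frac12;1;a^2\tau)$ by $e^{a^2\tau/2}I_0(a^2\tau/2)$. Then take $a^2\mapsto 2a$ together with $\tau\mapsto t-\tau$ for (\ref{Convolution_I0*erfi}), and $a^2\mapsto a$ together with (\ref{Dawson_erfi}) for (\ref{Convolution_I0_daw}); your handling of the constants and exponential factors checks out. The analytic-continuation remark is harmless but unnecessary, since $t-\tau>0$ lets the principal square roots factor, and $\mathrm{erfi}(\sqrt{w})/\sqrt{w}$ is entire in $w$.
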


\begin{example}
The integral obtained in (\ref{Convolution_I0_daw})\ arises in the heat
conduction in a semi-infinite medium initially at zero temperature, with a
prescribed heat flux at the surface and constant lateral heat loss to the
environment.

\begin{enumerate}
\item Physical setup \newline
Consider a semi-infinite medium of thermal diffusivity $\kappa $, and
thermal conductivity $K$, located at $x\geq 0$, which is initially at zero
temperature. Also, consider that the surface of the medium (i.e. at $x=0$)
is exposed to the following heat flux, for $t>0$,
\begin{equation}
q\left( t\right) =q\,\frac{\mathrm{erf}\left( \sqrt{a\,t}\right) }{\sqrt{t}}.
\label{q(t)_erf}
\end{equation}%
Note that the asymptotic behaviour of $q\left( t\right) $ is realistic since
it does not diverge for $t\rightarrow 0$ and $t\rightarrow \infty $. Indeed,
according to (\ref{erf_series}), we have
\begin{equation*}
q\left( t\right) \approx \frac{2q}{\sqrt{\pi }},\quad t\rightarrow 0,
\end{equation*}%
and, according to (\ref{erf_asymptotic}), we have
\begin{equation*}
q\left( t\right) \approx \frac{q}{\sqrt{t}},\quad t\rightarrow \infty .
\end{equation*}

\item Governing equations

\begin{itemize}
\item Heat equation
\begin{equation}
\frac{\partial T\left( x,t\right) }{\partial t}=\kappa \frac{\partial
^{2}T\left( x,t\right) }{\partial x^{2}}-a\,T\left( x,t\right) ,\quad
x>0,\,t>0,  \label{Heat_Equation_I0}
\end{equation}%
where $a$ represents the linear cooling coefficient.

\item Boundary conditions
\begin{equation}
-K\left. \frac{\partial T\left( x,t\right) }{\partial x}\right\vert
_{x=0}=q\left( t\right) ,  \label{BC_q(t)_I0}
\end{equation}%
\begin{equation}
\lim_{x\rightarrow \infty }T\left( x,t\right) =0.  \label{BC_inf_I0}
\end{equation}

\item Initial condition
\begin{equation}
T(x,0)=0.  \label{IC_I0}
\end{equation}
\end{itemize}

\item Apply the Laplace transform to the heat equation (\ref%
{Heat_Equation_I0}), taking into account the derivative theorem of the
Laplace transform (\ref{Derivative_Laplace})\ and the initial condition (\ref%
{IC_I0}), to obtain%
\begin{equation*}
\left( s+a\right) \,\mathcal{L}\left[ T\left( x,t\right) ,s\right] =\kappa
\frac{\partial ^{2}}{\partial x^{2}}\mathcal{L}\left[ T\left( x,t\right) ,s%
\right] .
\end{equation*}%
Thus, taking into account the boundary condition (\ref{BC_inf_I0}), we solve
for $\mathcal{L}\left[ T\left( x,t\right) ,s\right] $, to obtain%
\begin{equation}
\mathcal{L}\left[ T\left( x,t\right) ,s\right] =\mathcal{L}\left[ T\left(
0,t\right) ,s\right] \exp \left( -\sqrt{\frac{s+a}{\kappa }}x\right) .
\label{Laplace_I0}
\end{equation}%
Now, apply the Laplace transform to the boundary condition (\ref{BC_q(t)_I0}%
),
\begin{equation}
-K\left. \frac{\partial }{\partial x}\mathcal{L}\left[ T\left( x,t\right) ,s%
\right] \right\vert _{x=0}=\mathcal{L}\left[ q(t),s\right] =Q(s),
\label{Laplace_flux_1}
\end{equation}%
where, inserting the heat flux given in (\ref{q(t)_erf})\ and applying the
Laplace transform \cite[Eqn. 3.7.1(10)]{Prudnikov4}, we have%
\begin{equation}
Q\left( s\right) =q\,\mathcal{L}\left[ \frac{\mathrm{erf}\left( \,\sqrt{a\,t}%
\right) }{\sqrt{t}},s\right] =\frac{2q}{\sqrt{\pi \,s}}\arctan \left( \sqrt{%
\frac{a}{s}}\right) .  \label{Q(s)_def}
\end{equation}%
However, from (\ref{Laplace_I0}), we have%
\begin{equation}
-K\left. \frac{\partial }{\partial x}\mathcal{L}\left[ T\left( x,t\right) ,s%
\right] \right\vert _{x=0}=E\,\mathcal{L}\left[ T\left( 0,t\right) ,s\right]
\,\sqrt{s+a},  \label{Laplace_flux_2}
\end{equation}%
where $E=\frac{K}{\sqrt{\kappa }}$ denotes the effusivity of the
semi-infinite medium. Match the results (\ref{Laplace_flux_1})\ and (\ref%
{Laplace_flux_2})\ to obtain%
\begin{equation*}
\mathcal{L}\left[ T\left( 0,t\right) ,s\right] =\frac{1}{E}\frac{Q\left(
s\right) }{\sqrt{s+a}}=\frac{1}{E}\frac{G\left( s\right) }{\sqrt{s\left(
s+a\right) }},
\end{equation*}%
where, according to (\ref{Q(s)_def}), we have%
\begin{equation*}
G\left( s\right) =\sqrt{s}\,Q\left( s\right) =\frac{2q}{\sqrt{\pi }}\arctan
\left( \sqrt{\frac{a}{s}}\right) .
\end{equation*}%
Taking into account the inverse Laplace transform \cite[Eqn. 2.6.4(14)]%
{Prudnikov5}, and the property (\ref{Dawson_erfi}), we have%
\begin{equation}
\mathcal{L}^{-1}\left[ G\left( s\right) ,t\right] =\frac{q}{\sqrt{\pi }t}%
\exp \left( -a\,t\right) \,\mathrm{erfi}\left( \sqrt{a\,t}\right) =\frac{2q}{%
\pi \,t}\,\mathrm{daw}\left( \sqrt{a\,t}\right) .  \label{L-1=daw}
\end{equation}%
Also, according to the inverse Laplace transform \cite[Eqn. 2.1.5(3)]%
{Prudnikov5}, we have%
\begin{eqnarray}
\mathcal{L}^{-1}\left[ \frac{1}{\sqrt{s\left( s+a\right) }},t\right] &=&%
\mathcal{L}^{-1}\left[ \frac{1}{\sqrt{\left( s+\frac{a}{2}\right)
^{2}-\left( \frac{a}{2}\right) ^{2}}},t\right]  \label{L-1=exp*I0} \\
&=&e^{-at/2}\,\mathcal{L}^{-1}\left[ \frac{1}{\sqrt{s^{2}-\left( \frac{a}{2}%
\right) ^{2}}},t\right] =e^{-at/2}\,I_{0}\left( \frac{a}{2}t\right) .  \notag
\end{eqnarray}%
Next, apply the convolution theorem (\ref{Convolution_Laplace}), taking into
account the inverse Laplace transforms (\ref{L-1=daw})\ and (\ref{L-1=exp*I0}%
) to obtain%
\begin{equation*}
T\left( 0,t\right) =\frac{2\,q}{\pi \,E}\int_{0}^{t}e^{-a\,\tau
/2}\,I_{0}\left( \frac{a}{2}\,\tau \right) \,\frac{\mathrm{daw}\left( \sqrt{%
a\,\left( t-\tau \right) }\right) }{t-\tau }\,d\tau .
\end{equation*}%
Finally, apply the convolution integral (\ref{Convolution_I0_daw}) to obtain
the evolution of the temperature on the surface,
\begin{equation*}
T\left( 0,t\right) =\frac{4\,q}{\pi \,E}\,\sqrt{a\,t}\,e^{-a\,t}\,_{2}F_{2}%
\left(
\begin{array}{c}
1,1 \\
\frac{3}{2},\frac{3}{2}%
\end{array}%
;a\,t\right) .
\end{equation*}
\end{enumerate}
\end{example}

\section{Conclusions \label{Section: Conclusions}}

Using the Laplace convolution theorem and tabulated inverse Laplace
transforms, we compute several convolution integrals involving exponential,
error, and parabolic cylinder functions, which are not reported in the most
common tables of integrals in Theorems \ref{Theorem: Laplace 1}, \ref%
{Theorem: Laplace 2}, \ref{Theorem: Laplace 3}, \ref{Theorem: Laplace 4}, %
\ref{Theorem: Laplace 5}, and \ref{Theorem: Laplace 6}. Particular cases of
these Theorems are given in corollaries \ref{Corollary a Laplace 3}, \ref%
{Corollary b Laplace 3} and \ref{Corollary Laplace 6}. We check that these
convolution integrals are not computable with the aid of the symbolic
integration engine of Mathematica. In addition, we expanded the above
results using the repeated integration formula to obtain (\ref%
{Int_repeated_integration}), as well as repeated parametric differentiation,
yielding (\ref{Convolution_erfc_n})-(\ref{Proof_1})\ and (\ref%
{Convolution_Hermite_n}). It is noteworthy that parametric differentiation
has been used previously in \cite{Hetnarski} to generate inverse Laplace
transforms of exponential form similar to the ones used in this paper.

It is worth noting that we derive alternative proofs without using the
Laplace transform whenever this is possible. However, since these
alternative proofs are highly non-trivial, we conclude that the Laplace
convolution theorem is a very powerful tool for computing Laplace
convolution integrals straightforwardly.

Applications of the numerous convolution integrals obtained, to heat
transfer and linear viscoelasticity, are given in the examples presented
throughout the paper. It is noteworthy that most of the examples presented
are novel in the existing literature.

As a by-product, we obtain new integral representations of the error,
complementary error, and imaginary error functions in (\ref{erf_integral}), (%
\ref{erfc_integral_representation})\ and (\ref{erfi_integral}), respectively.

Finally, all numerical checks were carried out with Mathematica. The
corresponding Mathematica notebook is available at %
\url{https://shorturl.at/pK2Cn}.


\end{document}